\newcommand{\la}{\llangle}
\newcommand{\ra}{\rrangle}
\newcommand{\lv}{\lVert}
\newcommand{\rv}{\rVert}
\theoremstyle{plain}
\newtheorem{theorem}{Theorem}[section]
\newtheorem*{theorem*}{Theorem}
\newtheorem{lemma}[theorem]{Lemma}
\newtheorem{corollary}[theorem]{Corollary}
\newtheorem{remark}[theorem]{Remark}
\newtheorem*{mt*}{Main Theorem}
\newcommand\C{{\mathbb C}}
\renewcommand\phi{{\varphi}}
\newcommand\N{{\mathbb N}}
\newcommand\R{{\mathbb R}}
\renewcommand\H{{\mathcal H}}
\newcommand{\de}[2]{\frac{\partial #1}{\partial #2}}
\newcommand{\del}{\partial}
\newcommand{\delbar}{\overline{\del}}
\newcommand{\cinf}{\mathcal{C}^\infty}
\DeclareMathOperator{\supp}{supp}
\let\sup\undefined
\DeclareMathOperator*{\sup}{sup\vphantom{p}}
\DeclareMathOperator{\vol}{Vol}
\DeclareMathOperator{\Ker}{Ker}
\DeclareMathOperator{\D}{\mathcal{D}}
\DeclareMathOperator{\rank}{rank}
\DeclareMathOperator{\Hom}{Hom}
\DeclareMathOperator{\lV}{\lVert}
\DeclareMathOperator{\rV}{\rVert}
\let\phi\varphi
\let\c\overline
\title{$W^{1,2}$ Bott-Chern and Dolbeault Decompositions on
K\"ahler Manifolds}
\author{Riccardo Piovani}
\address{Dipartimento di Matematica\\
Universit\`{a} di Pisa\\
Largo Bruno Pontecorvo 5 \\
56127 Pisa, Italy}
\email{riccardo.piovani@phd.unipi.it}
\keywords{Bott-Chern harmonic forms, Dolbeault harmonic forms, K\"ahler manifolds, $L^2$ Hodge theory}
\subjclass[2010]{53C55, 32Q15}
\begin{document}
\maketitle

\begin{abstract}
Let $(M,J,g,\omega)$ be a K\"ahler manifold. We prove a $W^{1,2}$ weak Bott-Chern decomposition and a $W^{1,2}$ weak Dolbeault decomposition, following the $L^2$ weak Kodaira decomposition on Riemannian manifolds. Moreover, if the K\"ahler metric is complete and the  sectional curvature is bounded, the $W^{1,2}$ Bott-Chern decomposition is strictly related to the space of $W^{1,2}$ Bott-Chern harmonic forms, i.e., $W^{1,2}$ smooth differential forms which are in the kernel of an elliptic differential operator of order $4$, called Bott-Chern Laplacian. We also generalize to the non compact case the well known property that on compact K\"ahler manifolds the kernel of the Dolbeault Laplacian and the kernel of the Bott-Chern Laplacian coincide.
\end{abstract}

\section{Introduction}
Let $(M,g)$ be a Riemannian manifold of dimension $n$. Assume, for simplicity, the manifold is oriented, and consider $M$ endowed with the standard Riemannian volume form. Denote by $A^{k}$ the space of smooth $k$-forms, by $A^k_c$ the space of smooth $k$-forms with compact support, and by $L^2A^{k}$ the space of possibly non smooth measurable $k$-forms which are square integrable on $M$. Let $*:A^k\to A^{n-k}$ be the star Hodge operator. Indicate by $d$ the exterior differential on forms, and by $d^*$ its formal adjoint. Define $L^{2}\tilde{\mathcal{H}}^{k}\subset L^2A^{k}$ the subset of $L^2$ forms $\phi$ such that the forms $d\phi$ and $d^*\phi$ are equal to zero in the sense of distributions. 
Kodaira, in 1949, \cite{Ko}, proved the following fundamental orthogonal decomposition of the Hilbert space $L^2A^{k}$:
\begin{theorem}\cite[Theorem 24]{Dr}\label{teo-kod}
Let $(M,g)$ be a orientable Riemannian manifold. Then
\begin{equation*}
L^{2}A^{k}=L^{2}\tilde{\mathcal{H}}^{k}\overset{\perp}{\oplus}\overline{dA^{k-1}_c}\overset{\perp}{\oplus}\overline{d^*A^{k+1}_c}.
\end{equation*}
Moreover, $L^{2}\tilde{\mathcal{H}}^{k}\subset A^k$, and 
\begin{equation*}
L^{2}A^{k}\cap A^k=L^{2}\tilde{\mathcal{H}}^{k}\overset{\perp}{\oplus}\left(\overline{dA^{k-1}_c}\cap A^k\right)\overset{\perp}{\oplus}\left(\overline{d^*A^{k+1}_c}\cap A^k\right).
\end{equation*}
\end{theorem}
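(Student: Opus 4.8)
The plan is to read off the first decomposition from the Hilbert space structure of $L^{2}A^{k}$ and then to promote it using elliptic regularity for the Hodge--de Rham Laplacian $\Delta=dd^{*}+d^{*}d$. First I would check that the closed subspaces $\overline{dA^{k-1}_{c}}$ and $\overline{d^{*}A^{k+1}_{c}}$ are mutually orthogonal: for $\alpha\in A^{k-1}_{c}$ and $\beta\in A^{k+1}_{c}$ compact support makes the integration by parts legitimate, so $\langle d\alpha,d^{*}\beta\rangle_{L^{2}}=\langle dd\alpha,\beta\rangle_{L^{2}}=0$, and orthogonality passes to the closures by continuity of the inner product. Thus $V:=\overline{dA^{k-1}_{c}}\overset{\perp}{\oplus}\overline{d^{*}A^{k+1}_{c}}$ is a closed subspace and $L^{2}A^{k}=V\overset{\perp}{\oplus}V^{\perp}$. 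It then remains to identify $V^{\perp}$ with $L^{2}\tilde{\mathcal{H}}^{k}$: by density, $\phi\in L^{2}A^{k}$ lies in $V^{\perp}$ precisely when $\langle\phi,d\alpha\rangle_{L^{2}}=0$ for all $\alpha\in A^{k-1}_{c}$ and $\langle\phi,d^{*}\beta\rangle_{L^{2}}=0$ for all $\beta\in A^{k+1}_{c}$, which by definition of the distributional differential and codifferential is exactly the condition $d^{*}\phi=0$ and $d\phi=0$ in the sense of distributions; hence $V^{\perp}=L^{2}\tilde{\mathcal{H}}^{k}$.

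For the inclusion $L^{2}\tilde{\mathcal{H}}^{k}\subset A^{k}$, note that $\phi\in L^{2}\tilde{\mathcal{H}}^{k}$ satisfies $d\phi=0$ and $d^{*}\phi=0$ as distributions, hence $\Delta\phi=0$ as a distribution; since $\Delta$ is a second order elliptic operator with smooth coefficients, elliptic regularity (Weyl's lemma for $\Delta$) yields $\phi\in A^{k}$.

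The refined decomposition follows by applying elliptic regularity componentwise. Given $\phi\in L^{2}A^{k}\cap A^{k}$, write $\phi=h+\psi_{1}+\psi_{2}$ with $h\in L^{2}\tilde{\mathcal{H}}^{k}\subset A^{k}$, $\psi_{1}\in\overline{dA^{k-1}_{c}}$, $\psi_{2}\in\overline{d^{*}A^{k+1}_{c}}$. The orthogonality $\psi_{1}\perp d^{*}A^{k+1}_{c}$ and $\psi_{2}\perp dA^{k-1}_{c}$ gives $d\psi_{1}=0$ and $d^{*}\psi_{2}=0$ as distributions, while $dh=d^{*}h=0$ because $h$ is smooth and weakly harmonic. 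Applying the (linear) distributional operators to $\phi=h+\psi_{1}+\psi_{2}$ then gives $d^{*}\psi_{1}=d^{*}\phi$ and $d\psi_{2}=d\phi$ as distributions, and the right-hand sides are smooth forms since $\phi$ is smooth. Therefore $\Delta\psi_{1}=dd^{*}\psi_{1}=d(d^{*}\phi)$ and $\Delta\psi_{2}=d^{*}d\psi_{2}=d^{*}(d\phi)$ are smooth, so $\psi_{1},\psi_{2}\in A^{k}$ by elliptic regularity; that is, $\psi_{1}\in\overline{dA^{k-1}_{c}}\cap A^{k}$ and $\psi_{2}\in\overline{d^{*}A^{k+1}_{c}}\cap A^{k}$. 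The reverse inclusion is trivial, each summand being both smooth and $L^{2}$, and uniqueness together with the orthogonality of the three components is inherited from the first decomposition.

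The argument is short and essentially self-contained once elliptic regularity is taken as a black box, so I do not expect a serious obstacle; the step that needs care is the distributional bookkeeping in the last part — verifying that $d$ and $d^{*}$ act linearly on $L^{2}$ forms viewed as distributions, that $d\psi_{1}$ and $d^{*}\psi_{2}$ vanish purely by orthogonality of the two closed subspaces, and that $d^{*}\psi_{1}=d^{*}\phi$ and $d\psi_{2}=d\phi$ are equalities of distributions whose right-hand sides are represented by smooth forms, which is precisely what lets one invoke elliptic regularity on $\psi_{1}$ and $\psi_{2}$ separately.
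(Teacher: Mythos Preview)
The paper does not supply its own proof of this theorem---it is quoted from De Rham's book as background---so there is no in-paper proof to compare against directly. Your argument is correct and mirrors exactly the template the paper uses for its own $W^{1,2}$ analogues (Theorems~\ref{teo-decomp}, \ref{prop-reg}, \ref{teo-decomp-dol}, \ref{prop-reg-dol}): mutual orthogonality of the two image closures via integration by parts, identification of their joint orthocomplement with the weakly harmonic forms, and elliptic regularity for the Laplacian applied first to the harmonic piece and then, with smooth right-hand side inherited from the smooth $\phi$, to the two remaining components.
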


After the establishment of Theorem \ref{teo-kod} by Kodaira, this new born $L^2$ Hodge theory was contextualized into the theory of unbounded operators between Hilbert spaces. See, e.g., \cite{C}, \cite{CGM}, \cite{D} for some classical reviews, and, e.g., \cite{A}, \cite{B}, \cite[Chapter VIII]{De} for some modern reviews of the topic. See \cite[Chapter VIII]{De} also for various applications of this theory on complex manifolds, e.g., $L^2$ estimates for the solutions of equations
$\delbar u=v$ on weakly pseudoconvex manifolds.

It is useful to understand under which circumstances the space $L^{2}\tilde{\mathcal{H}}^{k}$ coincides with the space of $L^2$ harmonic forms, since this is true for compact Riemannian manifolds.
Denote by $\Delta_d=dd^*+d^*d$ the Hodge Laplacian.
If the Riemannian manifold $(M,g)$ is complete, given a form $\phi\in L^2A^{k}\cap A^k$, Andreotti and Vesentini, in 1965, \cite[Proposition 7]{AV}, proved that $d\phi=0$ and $d^*\phi=0$ if and only if $\Delta_d\phi=0$, i.e.,
\begin{equation*}
L^{2}\tilde{\mathcal{H}}^{k}=L^{2}{\mathcal{H}}^{k}:=\{\phi\in L^2A^{k}\cap A^k\,|\,\Delta_d\phi=0\}.
\end{equation*}

These results by Kodaira, Andreotti and Vesentini hold without any significant modifications in the contex of Hermitian manifolds $(M,J,g,\omega)$, substituing the operators $d$, $d^*$ and $\Delta_d$ respectively either with $\delbar$, $\delbar^*:=-*\delbar*$ and $\Delta_{\delbar}=\delbar\delbar^*+\delbar^*\delbar$, or with $\del$, $\del^*:=-*\del*$ and $\Delta_{\del}=\del\del^*+\del^*\del$, where $*: A^{p,q}\to  A^{n-p,n-q}$ is the complex anti-linear Hodge operator associated with $g$, and $A^{p,q}$ denotes the space of complex forms of bidegree $(p,q)$.

Kodaira and Spencer, in 1960, \cite{KS}, while developing the theory of deformations of complex structures, introduced the following elliptic and formally self adjoint differential operator of order $4$
\begin{equation*}
\tilde\Delta_{BC} \;:=\;
\del\delbar\delbar^*\del^*+
\delbar^*\del^*\del\delbar+\del^*\delbar\delbar^*\del+\delbar^*\del\del^*\delbar
+\del^*\del+\delbar^*\delbar
\end{equation*}
to prove the stability of the K\"ahler condition under small deformations. Schweitzer, in 2007, \cite{S}, developed a Hodge theory and proved a Hodge decomposition for the operator $\tilde\Delta_{BC}$ on compact complex manifolds, naming it the {\em Bott-Chern Laplacian}, since its kernel turns out to be isomorphic to the Bott-Chern cohomology. The Hodge decomposition proved by Schweitzer is called Bott-Chern decomposition. The kernel of the Bott-Chern Laplacian is called the space of Bott-Chern harmonic forms and it is denoted by $\H^{p,q}_{BC}$. He proved
\begin{theorem}[Bott-Chern decomposition]\cite[Theorem 2.2]{S}\label{teo-schw}
Let $(M,J,g,\omega)$ be a compact Hermitian manifold. Then
\begin{gather*}\label{bc-pieces}
A^{p,q}=\H^{p,q}_{BC}\overset{\perp}{\oplus}{\del\delbar A^{p-1,q-1}}\overset{\perp}{\oplus}{\del^* A^{p+1,q}+\delbar^* A^{p,q+1}}.
\end{gather*}
\end{theorem}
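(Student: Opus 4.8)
The plan is to deduce this decomposition from the general Hodge decomposition for elliptic, formally self-adjoint operators on a compact manifold, applied to the fourth order operator $\tilde\Delta_{BC}$, supplemented by a few integration by parts.

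First I would record that $\tilde\Delta_{BC}$ is formally self-adjoint by construction and elliptic, the latter being the Kodaira--Spencer principal symbol computation (away from the zero section its symbol agrees with that of the non-negative operator $(\del\delbar)^*(\del\delbar)+(\del\delbar)(\del\delbar)^*$). Since $M$ is compact, elliptic theory then yields the orthogonal $L^2$-decomposition
\begin{equation*}
A^{p,q}=\H^{p,q}_{BC}\overset{\perp}{\oplus}\tilde\Delta_{BC}A^{p,q},\qquad \H^{p,q}_{BC}=\ker\tilde\Delta_{BC},
\end{equation*}
with $\H^{p,q}_{BC}$ finite dimensional and $\tilde\Delta_{BC}A^{p,q}$ (the smooth forms $\tilde\Delta_{BC}\psi$, $\psi\in A^{p,q}$) equal to the $L^2$-orthogonal complement of $\H^{p,q}_{BC}$ inside $A^{p,q}$. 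So it remains to identify $\tilde\Delta_{BC}A^{p,q}$ with $\del\delbar A^{p-1,q-1}+\del^*A^{p+1,q}+\delbar^*A^{p,q+1}$ and to check the internal orthogonality.

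Next I would pin down $\H^{p,q}_{BC}$. Using $(\del\delbar)^*=\delbar^*\del^*$, $(\delbar^*\del)^*=\del^*\delbar$ and $(\del^*\delbar)^*=\delbar^*\del$, the six summands of $\tilde\Delta_{BC}$ pair up as squares, giving for every $\phi\in A^{p,q}$
\begin{equation*}
\langle\tilde\Delta_{BC}\phi,\phi\rangle=\|\delbar^*\del^*\phi\|^2+\|\del\delbar\phi\|^2+\|\delbar^*\del\phi\|^2+\|\del^*\delbar\phi\|^2+\|\del\phi\|^2+\|\delbar\phi\|^2;
\end{equation*}
together with $\del\delbar=-\delbar\del$ and $\del^2=\delbar^2=0$ this forces $\tilde\Delta_{BC}\phi=0\iff\del\phi=0,\ \delbar\phi=0,\ \delbar^*\del^*\phi=0$. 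Then the identification is routine bookkeeping: tracking bidegrees, each term of $\tilde\Delta_{BC}\psi$ lands in $\del\delbar A^{p-1,q-1}$, in $\del^*A^{p+1,q}$ or in $\delbar^*A^{p,q+1}$, so $\tilde\Delta_{BC}A^{p,q}\subseteq S:=\del\delbar A^{p-1,q-1}+\del^*A^{p+1,q}+\delbar^*A^{p,q+1}$; conversely, the characterization of $\H^{p,q}_{BC}$ gives, by integration by parts, $\langle\psi,\del\delbar\alpha\rangle=\langle\delbar^*\del^*\psi,\alpha\rangle=0$, $\langle\psi,\del^*\beta\rangle=\langle\del\psi,\beta\rangle=0$ and $\langle\psi,\delbar^*\gamma\rangle=\langle\delbar\psi,\gamma\rangle=0$ for $\psi\in\H^{p,q}_{BC}$, so $S$ is smooth and $L^2$-orthogonal to $\H^{p,q}_{BC}$, hence contained in $\tilde\Delta_{BC}A^{p,q}$, and $\tilde\Delta_{BC}A^{p,q}=S$. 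Finally $\langle\del\delbar\alpha,\del^*\beta\rangle=\langle\del\del\delbar\alpha,\beta\rangle=0$ and $\langle\del\delbar\alpha,\delbar^*\gamma\rangle=\langle\delbar\del\delbar\alpha,\gamma\rangle=0$, so $\del\delbar A^{p-1,q-1}\perp(\del^*A^{p+1,q}+\delbar^*A^{p,q+1})$, and the three-piece orthogonal decomposition follows (directness being automatic from orthogonality).

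The hard part is not really hard: the only non-formal input is the ellipticity of the order four operator $\tilde\Delta_{BC}$ used in the first step, after which everything is integration by parts and bidegree counting. For a self-contained account, the one computation to carry out carefully is that the principal symbol of $\tilde\Delta_{BC}$ is invertible off the zero section (equivalently, that it matches the symbol of a power of a Laplacian), which is the Kodaira--Spencer symbol computation.
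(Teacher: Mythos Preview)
The paper does not give its own proof of this theorem: it is quoted in the introduction as background, with a citation to Schweitzer \cite[Theorem 2.2]{S}, and the paper's contributions are the $W^{1,2}$ analogues (Theorems \ref{teo-decomp} and \ref{prop-reg}). Your outline is the standard argument and matches Schweitzer's approach: apply elliptic Hodge theory to the fourth order operator $\tilde\Delta_{BC}$ to get $A^{p,q}=\H^{p,q}_{BC}\oplus\tilde\Delta_{BC}A^{p,q}$, characterize the kernel by the quadratic form, and then identify the image with $\del\delbar A^{p-1,q-1}+\del^*A^{p+1,q}+\delbar^*A^{p,q+1}$ via integration by parts and bidegree bookkeeping. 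The logic is sound.

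One inaccuracy worth flagging: your parenthetical that the principal symbol of $\tilde\Delta_{BC}$ ``agrees with that of $(\del\delbar)^*(\del\delbar)+(\del\delbar)(\del\delbar)^*$'' is not correct. That two-term operator is \emph{not} elliptic (its symbol annihilates, for instance, forms containing $\xi^{1,0}$ but not $\xi^{0,1}$ as a factor). Ellipticity of $\tilde\Delta_{BC}$ genuinely uses all four fourth-order terms $\del\delbar\delbar^*\del^*+\delbar^*\del^*\del\delbar+\del^*\delbar\delbar^*\del+\delbar^*\del\del^*\delbar$; the Kodaira--Spencer computation shows their combined symbol is (a multiple of) $|\xi|^4\cdot\mathrm{id}$. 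You already cite this correctly as the one non-formal input, so just drop or fix the parenthetical.
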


During the last years, Tomassini and the author of the present paper studied $W^{1,2}$ Bott-Chern harmonic forms, namely smooth forms which are in the kernel of the operator $\tilde\Delta_{BC}$ with bounded $W^{1,2}$ norm, on $d$-bounded Stein manifolds \cite{PT1}, and on complete Hermitian manifolds \cite{PT2}. We proved some characterizations of $W^{1,2}$ Bott-Chern harmonic forms and vanishing results following Gromov \cite{G}. In particular, on complete K\"ahler manifolds with bounded sectional curvature, we generalized the classical characterization of Bott-Chern harmonic forms holding on compact K\"ahler manifolds. The result can be viewed as the Bott-Chern analogue of the Theorem by Andreotti and Vesentini we discussed above. Denote by $\lv\cdot\rv$ the standard $L^2$ norm defined on tensors, and by $\nabla$ the Levi-Civita connection.
\begin{theorem}\cite[Theorem 4.4]{PT2}\label{cor-kahler}
Let $(M,J,g,\omega)$ be a complete K\"ahler manifold. Assume that the sectional curvature is bounded. 
Let $\phi\in A^{p,q}$, with $\lv\phi\rv<+\infty$ and $\lv\nabla\phi\rv<+\infty$. Then
\begin{equation*}\label{bc-pieces}
\begin{split}
\tilde{\Delta}_{BC}\phi=0\quad\iff\quad \del\phi=0,\,\delbar\phi=0,\,\del^*\phi=0,\,\delbar^*\phi=0.
\end{split}
\end{equation*}
\end{theorem}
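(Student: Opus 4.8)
The implication ``$\Longleftarrow$'' is immediate from the defining formula of $\tilde\Delta_{BC}$: in each of its six summands the operator acting first on $\phi$ is one of $\del,\delbar,\del^*,\delbar^*$, so if all four annihilate $\phi$ then so does $\tilde\Delta_{BC}$. For ``$\Longrightarrow$'' the plan is to reduce $\tilde\Delta_{BC}$ to $\Delta_{\delbar}$ by means of the K\"ahler identities and then to invoke the Andreotti--Vesentini theorem recalled above. On a K\"ahler manifold the K\"ahler identities yield the anticommutation relations $\del\delbar^*+\delbar^*\del=0$ and $\delbar\del^*+\del^*\delbar=0$, which, together with the always-valid identities $\del^2=\delbar^2=0$ and $\del\delbar+\delbar\del=0$, allow one to rewrite the four fourth-order summands of $\tilde\Delta_{BC}$ as $\del\delbar\delbar^*\del^*=\del\del^*\delbar\delbar^*$, $\delbar^*\del^*\del\delbar=\del^*\del\delbar^*\delbar$, $\del^*\delbar\delbar^*\del=\del^*\del\delbar\delbar^*$ and $\delbar^*\del\del^*\delbar=\del\del^*\delbar^*\delbar$; summing these and using that $\Delta_\del=\Delta_{\delbar}$ on a K\"ahler manifold, one obtains the identity of differential operators
\[\tilde\Delta_{BC}=\Delta_\del\Delta_{\delbar}+\del^*\del+\delbar^*\delbar=\Delta_{\delbar}^2+\del^*\del+\delbar^*\delbar .\]
Thus $\tilde\Delta_{BC}$ is a sum of non-negative, formally self-adjoint operators, and formally $\langle\tilde\Delta_{BC}\phi,\phi\rangle=\lv\Delta_{\delbar}\phi\rv^2+\lv\del\phi\rv^2+\lv\delbar\phi\rv^2$, which on a compact manifold already forces $\Delta_{\delbar}\phi=0$ (and $\del\phi=\delbar\phi=0$). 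The issue is to legitimize this integration by parts on a complete, non-compact manifold.

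To do so I would fix exhausting cutoff functions $\rho_k$ with $0\le\rho_k\le1$, $\rho_k\to1$ pointwise, compact supports, and $|d\rho_k|\le C/k$, $|\nabla^2\rho_k|\le C/k$ everywhere; such functions exist because $(M,g)$ is complete and, crucially, has bounded curvature (one smooths the distance function). First I would upgrade the integrability of $\phi$: since $\tilde\Delta_{BC}$ is elliptic of order four, the equation $\tilde\Delta_{BC}\phi=0$ together with $\lv\phi\rv,\lv\nabla\phi\rv<+\infty$ and the curvature bound yields $\nabla^2\phi\in L^2$ --- either via interior elliptic estimates, or via the Weitzenb\"ock identity $\Delta_{\delbar}=\tfrac12(\nabla^*\nabla+\mathcal R)$, with $\mathcal R$ bounded, combined with a cutoff computation --- and in particular $\Delta_{\delbar}\phi\in L^2$. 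Next I would test the equation $\tilde\Delta_{BC}\phi=0$ against $\rho_k^2\phi$ and integrate by parts using the identity above: the contributions of $\del^*\del$ and $\delbar^*\delbar$ give $\lv\rho_k\del\phi\rv^2+\lv\rho_k\delbar\phi\rv^2$ up to errors bounded by $(C/k)\lv\nabla\phi\rv\,\lv\phi\rv$, while the fourth-order contribution equals $\lv\rho_k\Delta_{\delbar}\phi\rv^2+\langle\Delta_{\delbar}\phi,[\Delta_{\delbar},\rho_k^2]\phi\rangle$, whose commutator term is bounded by $(C/k)\lv\Delta_{\delbar}\phi\rv\,(\lv\nabla\phi\rv+\lv\phi\rv)$ thanks to the cutoff bounds. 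Letting $k\to\infty$ and using $\phi,\nabla\phi,\Delta_{\delbar}\phi\in L^2$ yields $0=\lv\Delta_{\delbar}\phi\rv^2+\lv\del\phi\rv^2+\lv\delbar\phi\rv^2$; in particular $\Delta_{\delbar}\phi=0$.

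To conclude, $\phi\in L^2A^{p,q}\cap A^{p,q}$, $M$ is complete, and $\Delta_{\delbar}\phi=0$, so the $\delbar$-form of the Andreotti--Vesentini theorem gives $\delbar\phi=0$ and $\delbar^*\phi=0$; since $\Delta_\del=\Delta_{\delbar}$ on K\"ahler manifolds we also have $\Delta_\del\phi=0$, and the $\del$-form of the same theorem gives $\del\phi=0$ and $\del^*\phi=0$. This produces the four vanishings and finishes the proof.

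The hard part will be the middle paragraph, namely handling the genuinely fourth-order operator $\tilde\Delta_{BC}$ (equivalently $\Delta_{\delbar}^2$): integrating it by parts against $\rho_k^2\phi$ a priori involves second (and, before the rearrangement, even third) covariant derivatives of $\phi$, which are not controlled by $\lv\phi\rv,\lv\nabla\phi\rv<+\infty$ alone. The bounded-curvature hypothesis is used exactly here --- both to produce cutoff functions with bounded, vanishing Hessian and to bootstrap $\phi\in W^{1,2}$ up to $\phi\in W^{2,2}$ --- so that every error term in the cutoff computation tends to zero. This is the natural non-compact counterpart of the difficulty that Andreotti and Vesentini overcame for $\Delta_d$.
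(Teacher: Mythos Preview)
The paper does not actually prove this statement: Theorem~\ref{cor-kahler} is quoted verbatim from \cite[Theorem~4.4]{PT2} and is used as an input (in Section~\ref{decomp-compl}), so there is no in-paper proof to compare your proposal against. What the present paper does contain that is relevant to your argument is the K\"ahler identity $\tilde\Delta_{BC}=\Delta_{\delbar}^2+\del^*\del+\delbar^*\delbar$ (derived in Section~\ref{bg material}) and the norm comparison Lemmas~\ref{ineq-kahler}--\ref{equiv-kahler}; your rewriting of the fourth-order part matches that identity exactly.

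Your strategy is the natural one and, as far as it goes, correct. A couple of points deserve care. First, the step ``bootstrap $\phi\in W^{1,2}$ to $\phi\in W^{2,2}$ (hence $\Delta_{\delbar}\phi\in L^2$)'' is not automatic from interior elliptic estimates alone, since those give only local control; you really do have to run a cutoff argument already at this stage, and the cleanest way is to integrate $\langle\Delta_{\delbar}^2\phi,\rho_k^2\phi\rangle=\langle\Delta_{\delbar}\phi,\Delta_{\delbar}(\rho_k^2\phi)\rangle$ by parts and absorb the principal commutator term (which carries a factor $\rho_k\,d\rho_k$) into $\tfrac12\lv\rho_k\Delta_{\delbar}\phi\rv^2$ before passing to the limit. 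Second, the zero-order piece of the commutator $[\Delta_{\delbar},\rho_k^2]$ involves $|d\rho_k|^2$ (with no $\rho_k$ factor), so the corresponding error term pairs $\Delta_{\delbar}\phi$ with $\phi$ on the annulus $\{d\rho_k\neq0\}$; to control it without already knowing $\Delta_{\delbar}\phi\in L^2$ you should integrate one more $\delbar$ or $\delbar^*$ back onto $\phi$, which costs only $|d\rho_k|$, $|\nabla^2\rho_k|\le C/k$ and the finite quantities $\lv\delbar\phi\rv,\lv\delbar^*\phi\rv\le C\lv\phi\rv_1$ from Lemma~\ref{ineq-kahler}. With these two refinements your outline goes through, and the concluding appeal to the $\delbar$- and $\del$-versions of Andreotti--Vesentini is exactly right.
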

Taking into account Theorem \ref{cor-kahler}, we are motivated to investigate a decomposition of a Sobolev space of differential $(p,q)$-forms, involving the above mentioned space of $W^{1,2}$ Bott-Chern harmonic forms. To do this, we introduce the following $W^{1,2}$ inner product, as in \cite[Section 2]{AV}.

Let $\la\cdot,\cdot\ra$ be the standard $L^2$ inner product defined for $(p,q)$-forms on Hermitian manifolds. For $\alpha,\beta\in A^{p,q}$, set the $W^{1,2}$ inner product
\begin{equation*}
\la\alpha,\beta\ra_2:=\la\alpha,\beta\ra+\la\delbar\alpha,\delbar\beta\ra+\la\delbar^*\alpha,\delbar^*\beta\ra,
\end{equation*}
and denote by $W^{1,2}_2 A^{p,q}$ the completion of the space of $(p,q)$-forms with compact support $A^{p,q}_c$ with respect to the norm $\lv\cdot\rv_2:=\la\cdot,\cdot\ra_2^{\frac12}$.  Define the space
\begin{equation*}
W^{1,2}_2\tilde{\mathcal{H}}^{p,q}_{BC}:=\{\phi\in W^{1,2}_2 A^{p,q}\,|\,\forall\gamma\in A^{*,*}_c\ \la \phi,d^*\gamma\ra_2=\la\phi,\del\delbar\gamma\ra_2=0\}.
\end{equation*}
Then, we are able to prove the following $W^{1,2}$ weak Bott-Chern decomposition. See Theorem \ref{teo-decomp} and Proposition \ref{prop-reg}.
\begin{theorem}\label{thm-main-1}
Let $(M,J,g,\omega)$ be a K\"ahler manifold. Then, we get the following orthogonal decomposition of the Hilbert space $(W^{1,2}_2 A^{p,q},\la\cdot,\cdot\ra_2)$:
\begin{align*}
W^{1,2}_2 A^{p,q}&=W^{1,2}_2\tilde{\mathcal{H}}^{p,q}_{BC}\overset{\perp}{\oplus}\overline{\del\delbar A^{p-1,q-1}_c}\overset{\perp}{\oplus}\overline{\del^* A^{p+1,q}_c+\delbar^* A^{p,q+1}_c}.
\end{align*}
Moreover, $W^{1,2}_2\tilde{\mathcal{H}}^{p,q}_{BC}\subset A^{p,q}$, and 
\begin{gather*}
W^{1,2}_2 A^{p,q}\cap A^{p,q}=W^{1,2}_2\tilde{\mathcal{H}}^{p,q}_{BC}\overset{\perp}{\oplus}\left(\overline{\del\delbar A^{p-1,q-1}_c}\cap A^{p,q}\right)\overset{\perp}{\oplus}\left(\overline{\del^* A^{p+1,q}_c+\delbar^* A^{p,q+1}_c}\cap A^{p,q}\right).
\end{gather*}
\end{theorem}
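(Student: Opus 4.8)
The plan is to mirror the two-step structure of the classical $L^2$ theory (Theorems \ref{teo-kod} and \ref{teo-schw}), with $L^2$ replaced everywhere by the Sobolev product $\la\cdot,\cdot\ra_2$: first a purely Hilbert-space orthogonal decomposition in which $W^{1,2}_2\tilde{\mathcal{H}}^{p,q}_{BC}$ appears as an annihilator, then an elliptic regularity statement identifying that annihilator with smooth forms. I will use freely that the canonical map $W^{1,2}_2 A^{p,q}\to L^2 A^{p,q}$ is a well-defined injection, so each $\phi\in W^{1,2}_2 A^{p,q}$ is a genuine $L^2$ form whose distributional $\delbar\phi,\delbar^*\phi$ lie in $L^2$; the K\"ahler identity $\|\del\alpha\|^2+\|\del^*\alpha\|^2=\|\delbar\alpha\|^2+\|\delbar^*\alpha\|^2$ on $A^{p,q}_c$ (from $\Delta_{\del}=\Delta_{\delbar}$) then gives the same for $\del\phi,\del^*\phi$, and shows that $\la\cdot,\cdot\ra_2$-convergence forces $L^2$-convergence of all of $\del,\delbar,\del^*,\delbar^*$.

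For the orthogonal decomposition: because $\phi\in A^{p,q}$ and $\la\cdot,\cdot\ra_2$ pairs $\phi,\delbar\phi,\delbar^*\phi$ only with the bidegree $(p,q),(p,q+1),(p,q-1)$ pieces of $d^*\gamma$ and $\del\delbar\gamma$, the two defining conditions of $W^{1,2}_2\tilde{\mathcal{H}}^{p,q}_{BC}$ say precisely that it is the $\la\cdot,\cdot\ra_2$-orthogonal complement of $V:=\del\delbar A^{p-1,q-1}_c+\del^* A^{p+1,q}_c+\delbar^* A^{p,q+1}_c$ inside $W^{1,2}_2 A^{p,q}$; hence $W^{1,2}_2 A^{p,q}=W^{1,2}_2\tilde{\mathcal{H}}^{p,q}_{BC}\overset{\perp}{\oplus}\overline{V}$ automatically. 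To split $\overline{V}$ into the two closures of the statement one only needs $\del\delbar A^{p-1,q-1}_c\perp_2\del^* A^{p+1,q}_c+\delbar^* A^{p,q+1}_c$; expanding $\la\del\delbar\gamma,\del^*\eta+\delbar^*\zeta\ra_2$ and using $\del^2=\delbar^2=0$, $\del\delbar+\delbar\del=0$ together with the K\"ahler identities $\del\delbar^*+\delbar^*\del=0$ and $\del^*\delbar^*+\delbar^*\del^*=0$, every summand of the $W^{1,2}_2$ product collapses. This is the only place the K\"ahler hypothesis is used for the decomposition, and it is routine bookkeeping.

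The crux is the regularity $W^{1,2}_2\tilde{\mathcal{H}}^{p,q}_{BC}\subset A^{p,q}$. For $h$ in this space and $\chi\in A^{p,q}_c$, integrating by parts in the two non-$L^2$ terms of $\la h,\chi\ra_2$ gives $\la h,\chi\ra_2=\la h,(\mathrm{Id}+\Delta_{\delbar})\chi\ra$. Separately, grouping each of the six summands of $\tilde\Delta_{BC}$ with its rightmost factor shows that for every $\psi\in A^{p,q}_c$ one can write $\tilde\Delta_{BC}\psi=\del\delbar\alpha+\del^*\beta+\delbar^*\delta$ with $\alpha\in A^{p-1,q-1}_c$, $\beta\in A^{p+1,q}_c$, $\delta\in A^{p,q+1}_c$ (take $\alpha=\delbar^*\del^*\psi$, $\beta=\delbar\delbar^*\del\psi+\del\psi$, $\delta=\del^*\del\delbar\psi+\del\del^*\delbar\psi+\delbar\psi$). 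Taking $\chi=\tilde\Delta_{BC}\psi$ above and using that $h$ annihilates $\del\delbar A^{p-1,q-1}_c$, $\del^* A^{p+1,q}_c$, $\delbar^* A^{p,q+1}_c$ for $\la\cdot,\cdot\ra_2$, we get $\la h,(\mathrm{Id}+\Delta_{\delbar})\tilde\Delta_{BC}\psi\ra=0$ for all $\psi\in A^{p,q}_c$; that is, $h$ solves $\tilde\Delta_{BC}(\mathrm{Id}+\Delta_{\delbar})h=0$ in the sense of distributions. As $\tilde\Delta_{BC}$ is elliptic of order $4$ and $\mathrm{Id}+\Delta_{\delbar}$ of order $2$, their composite is elliptic of order $6$, so $h\in A^{p,q}$ by elliptic regularity. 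I expect the step ``recognize that the weak annihilator conditions amount to a single elliptic equation'' to be the main obstacle.

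Finally, the decomposition of $W^{1,2}_2 A^{p,q}\cap A^{p,q}$. The inclusion $\supseteq$ is clear, since each summand lies in $W^{1,2}_2 A^{p,q}$ and $W^{1,2}_2\tilde{\mathcal{H}}^{p,q}_{BC}\subset A^{p,q}$ by the above. Conversely, given $\phi$ in the left side, write $\phi=h+u+v$ by the decomposition just proved, with $h\in W^{1,2}_2\tilde{\mathcal{H}}^{p,q}_{BC}\subset A^{p,q}$, $u\in\overline{\del\delbar A^{p-1,q-1}_c}$, $v\in\overline{\del^* A^{p+1,q}_c+\delbar^* A^{p,q+1}_c}$. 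Passing to the limit along $\la\cdot,\cdot\ra_2$-Cauchy approximants of $u$ and $v$ gives $\del u=\delbar u=0$ and $\delbar^*\del^* v=0$ in the sense of distributions; hence $v=\phi-h-u$ satisfies $\del v=\del(\phi-h)$, $\delbar v=\delbar(\phi-h)$ (smooth, since $\phi$ and $h$ are) and $\delbar^*\del^* v=0$. The operator $(\del,\delbar,\delbar^*\del^*)$ is overdetermined elliptic on $(p,q)$-forms — its principal symbol $\xi\mapsto(\xi^{1,0}\wedge\,\cdot\,,\ \xi^{0,1}\wedge\,\cdot\,,\ \iota_{\xi^{0,1}}\iota_{\xi^{1,0}}\,\cdot\,)$ is injective off the zero section — so $v\in A^{p,q}$, and then $u=\phi-h-v\in A^{p,q}$. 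Thus the three summands lie in $W^{1,2}_2\tilde{\mathcal{H}}^{p,q}_{BC}$, in $\overline{\del\delbar A^{p-1,q-1}_c}\cap A^{p,q}$ and in $\overline{\del^* A^{p+1,q}_c+\delbar^* A^{p,q+1}_c}\cap A^{p,q}$ respectively, and the orthogonality is inherited from the first decomposition, completing the proof.
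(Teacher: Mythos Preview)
Your argument is correct and, for the first two steps, essentially identical to the paper's: the orthogonality of $\del\delbar A^{p-1,q-1}_c$ and $\del^* A^{p+1,q}_c+\delbar^* A^{p,q+1}_c$ in $\la\cdot,\cdot\ra_2$ is exactly the content of the paper's Lemma \ref{adj-kahler} (the K\"ahler integration-by-parts for $\la\cdot,\cdot\ra_2$), and the smoothness of $h\in W^{1,2}_2\tilde{\mathcal{H}}^{p,q}_{BC}$ is obtained in the paper precisely via the identity $\la h,\chi\ra_2=\la h,(1+\Delta_{\delbar})\chi\ra$ and the order-$6$ elliptic operator $\tilde\Delta_{BC}(1+\Delta_{\delbar})$, as you do.

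The only genuine difference is the last step, the regularity of the two non-harmonic summands when $\phi$ is smooth. The paper stays with the single fourth-order elliptic operator $\tilde\Delta_{BC}$: using $d_w\alpha_2=0$ and $(\delbar^*\del^*)_w\alpha_3=0$ it checks directly that $\alpha_2$ is a weak solution of $\tilde\Delta_{BC}\alpha_2=\del\delbar\delbar^*\del^*\alpha$ and $\alpha_3$ of $\tilde\Delta_{BC}\alpha_3=\delbar^*\del^*\del\delbar\alpha+\del^*\delbar\delbar^*\del\alpha+\delbar^*\del\del^*\delbar\alpha+\del^*\del\alpha+\delbar^*\delbar\alpha$, both right-hand sides smooth, and concludes by Theorem \ref{ell-reg}. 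Your route via the system $(\del,\delbar,\delbar^*\del^*)$ is sound, but note that this system has mixed orders $(1,1,2)$, so ``overdetermined elliptic'' must be understood in the Douglis--Nirenberg sense (the symbol injectivity you state is exactly the DN condition with row weights $(1,1,2)$); the paper's formulation sidesteps this technicality by packaging everything into the single elliptic operator $\tilde\Delta_{BC}$, which is perhaps the cleaner bookkeeping. Either way the conclusion is the same.
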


Define also the space
\begin{equation*}
W^{1,2}_2\tilde{\mathcal{H}}^{p,q}_{\delbar}:=\{\phi\in W^{1,2}_2 A^{p,q}\,|\,\forall\gamma\in A^{*,*}_c\ \la \phi,\delbar^*\gamma\ra_2=\la\phi,\delbar\gamma\ra_2=0\}.
\end{equation*}
Arguing in a similar way as before, we obtain the following $W^{1,2}$ weak Dolbeault decomposition. See Theorem \ref{teo-decomp-dol} and Proposition \ref{prop-reg-dol}.
\begin{theorem}\label{thm-main-2}
Let $(M,J,g,\omega)$ be a K\"ahler manifold. Then we get the following orthogonal decomposition of the Hilbert space $(W^{1,2}_2 A^{p,q},\la\cdot,\cdot\ra_2)$:
\begin{align*}
W^{1,2}_2 A^{p,q}&=W^{1,2}_2\tilde{\mathcal{H}}^{p,q}_{\delbar}\overset{\perp}{\oplus}\overline{\delbar A^{p,q-1}_c}\overset{\perp}{\oplus}\overline{\delbar^* A^{p,q+1}_c}.
\end{align*}
Moreover, $W^{1,2}_2\tilde{\mathcal{H}}^{p,q}_{\delbar}\subset A^{p,q}$, and 
\begin{gather*}
W^{1,2}_2 A^{p,q}\cap A^{p,q}=W^{1,2}_2\tilde{\mathcal{H}}^{p,q}_{\delbar}\overset{\perp}{\oplus}\left(\overline{\delbar A^{p,q-1}_c}\cap A^{p,q}\right)\overset{\perp}{\oplus}\left(\overline{\delbar^* A^{p,q+1}_c}\cap A^{p,q}\right).
\end{gather*}
\end{theorem}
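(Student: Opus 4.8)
The plan is to mimic the proof of the $W^{1,2}$ Bott-Chern decomposition (Theorem \ref{thm-main-1}), replacing the pair of differential operators $(\del\delbar,\, d^*)$ by the pair $(\delbar,\, \delbar^*)$ — or rather, to follow directly the structure of Kodaira's $L^2$ decomposition (Theorem \ref{teo-kod}), but in the Hilbert space $(W^{1,2}_2 A^{p,q}, \la\cdot,\cdot\ra_2)$. First I would observe that $W^{1,2}_2\tilde{\mathcal{H}}^{p,q}_{\delbar}$ is by definition the orthogonal complement, with respect to $\la\cdot,\cdot\ra_2$, of the subspace spanned by the two families $\{\delbar^*\gamma : \gamma\in A^{p,q+1}_c\}$ and $\{\delbar\gamma : \gamma\in A^{p,q-1}_c\}$ inside $W^{1,2}_2 A^{p,q}$ (after checking that these elements genuinely lie in $W^{1,2}_2 A^{p,q}$, which uses the K\"ahler identities to control $\delbar$ and $\delbar^*$ of such forms in $L^2$, exactly as in the Bott-Chern case). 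Hence the Hilbert space decomposition
\[
W^{1,2}_2 A^{p,q}=W^{1,2}_2\tilde{\mathcal{H}}^{p,q}_{\delbar}\overset{\perp}{\oplus}\overline{\delbar A^{p,q-1}_c+\delbar^* A^{p,q+1}_c}
\]
is immediate from the orthogonal projection theorem. The real work is to split the second summand as $\overline{\delbar A^{p,q-1}_c}\overset{\perp}{\oplus}\overline{\delbar^* A^{p,q+1}_c}$, i.e.\ to show these two closed subspaces are mutually orthogonal in $\la\cdot,\cdot\ra_2$ and that their sum is already closed.

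For the orthogonality I would compute $\la\delbar\alpha,\delbar^*\beta\ra_2$ for $\alpha\in A^{p,q-1}_c$, $\beta\in A^{p,q+1}_c$. Expanding the three terms of $\la\cdot,\cdot\ra_2$ and integrating by parts (legitimate since the supports are compact), the $L^2$-term gives $\la\delbar\alpha,\delbar^*\beta\ra=\la\delbar^2\alpha,\beta\ra=0$; the term $\la\delbar\delbar\alpha,\delbar\delbar^*\beta\ra=0$ again by $\delbar^2=0$; and for the cross term $\la\delbar^*\delbar\alpha,\delbar^*\delbar^*\beta\ra$ one uses $(\delbar^*)^2=0$ on the right. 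So each of the three contributions vanishes and the two subspaces are $\la\cdot,\cdot\ra_2$-orthogonal; passing to closures preserves this. Closedness of the sum of two mutually orthogonal closed subspaces is automatic in a Hilbert space, so the first displayed decomposition of the theorem follows.

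For the regularity statement $W^{1,2}_2\tilde{\mathcal{H}}^{p,q}_{\delbar}\subset A^{p,q}$ I would invoke elliptic regularity: a form $\phi\in W^{1,2}_2\tilde{\mathcal{H}}^{p,q}_{\delbar}$ satisfies, by testing against $\gamma\in A^{p,q}_c$ and using the defining conditions together with the explicit K\"ahler identities relating $\Delta_{\delbar}$ to $\Delta_d$, a weak equation of the form $(\Delta_{\delbar}+1)\phi=0$ (or an equivalent second–order elliptic equation with smooth coefficients), hence $\phi$ is smooth by interior elliptic regularity — this is Proposition \ref{prop-reg-dol}, which I would cite. Granting smoothness of the harmonic part, the last identity of the theorem,
\[
W^{1,2}_2 A^{p,q}\cap A^{p,q}=W^{1,2}_2\tilde{\mathcal{H}}^{p,q}_{\delbar}\overset{\perp}{\oplus}\left(\overline{\delbar A^{p,q-1}_c}\cap A^{p,q}\right)\overset{\perp}{\oplus}\left(\overline{\delbar^* A^{p,q+1}_c}\cap A^{p,q}\right),
\]
is obtained by intersecting the first decomposition with $A^{p,q}$: the inclusion $\supseteq$ is trivial, and for $\subseteq$ one takes $\phi\in W^{1,2}_2 A^{p,q}\cap A^{p,q}$, writes $\phi=\phi_{\mathcal H}+\phi_1+\phi_2$ according to the first decomposition, notes $\phi_{\mathcal H}\in A^{p,q}$ by the regularity part, hence $\phi_1+\phi_2=\phi-\phi_{\mathcal H}\in A^{p,q}$, and then uses orthogonality to conclude $\phi_1,\phi_2\in A^{p,q}$ individually (the projection onto each orthogonal summand of a smooth form lying in the sum of those two summands is smooth — this is the same bootstrap argument as in the Bott-Chern case).

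The main obstacle I anticipate is not the orthogonal-projection formalism, which is routine, but verifying cleanly that $\delbar\gamma$ and $\delbar^*\gamma$ for compactly supported $\gamma$ actually belong to $W^{1,2}_2 A^{p,q}$ — i.e.\ that their $\delbar$- and $\delbar^*$-images are again in $L^2$ — and, more delicately, the elliptic-regularity step showing $W^{1,2}_2\tilde{\mathcal{H}}^{p,q}_{\delbar}\subset A^{p,q}$: one must produce from the two weak conditions in the definition a genuine elliptic equation. Here the K\"ahler hypothesis is essential, since it is the K\"ahler identities that allow the first–order operators $\delbar,\delbar^*$ appearing in $\la\cdot,\cdot\ra_2$ to be related to a second–order elliptic operator and make the pairing $\la\phi,\delbar\gamma\ra_2=0$, $\la\phi,\delbar^*\gamma\ra_2=0$ translate into $(\Delta_{\delbar}+1)\phi=0$ weakly. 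Both of these points, however, run in exact parallel to the Bott-Chern case already treated, so I would phrase the proof as "arguing as in the proof of Theorem \ref{thm-main-1}, with $\del\delbar$ replaced by $\delbar$ and $d^*$ replaced by $\delbar^*$" and only spell out the orthogonality computation above in detail.
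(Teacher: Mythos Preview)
Your overall strategy matches the paper's (Theorem \ref{teo-decomp-dol} plus Theorem \ref{prop-reg-dol}): prove orthogonality of $\overline{\delbar A^{p,q-1}_c}$ and $\overline{\delbar^* A^{p,q+1}_c}$ in $\la\cdot,\cdot\ra_2$, identify the orthogonal complement of their sum as $W^{1,2}_2\tilde{\mathcal H}^{p,q}_{\delbar}$ by definition, and then invoke elliptic regularity. Your orthogonality computation is in fact slightly cleaner than the paper's, which quotes Lemma \ref{adj-kahler}; your direct use of $\delbar^2=0$ and $(\delbar^*)^2=0$ already suffices for this particular pairing. Also, the worry that $\delbar\gamma,\delbar^*\gamma$ might not lie in $W^{1,2}_2 A^{p,q}$ is unfounded: they are compactly supported smooth forms, hence in $A^{p,q}_c\subset W^{1,2}_2 A^{p,q}$ automatically.

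There is, however, a concrete error in the regularity step. The weak equation you obtain for $\phi\in W^{1,2}_2\tilde{\mathcal H}^{p,q}_{\delbar}$ is \emph{not} $(\Delta_{\delbar}+1)\phi=0$; that equation would force $\phi=0$. What the two defining conditions give is $\la\phi,\Delta_{\delbar}\gamma\ra_2=0$ for all $\gamma\in A^{p,q}_c$, and then the identity $\la\alpha,\beta\ra_2=\la\alpha,(1+\Delta_{\delbar})\beta\ra$ on compactly supported forms (equation (\ref{eq-norm-lapl}), which needs no K\"ahler hypothesis) yields $\la\phi,(1+\Delta_{\delbar})\Delta_{\delbar}\gamma\ra=0$. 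Thus $\phi$ is a weak solution of the \emph{fourth}-order elliptic equation $\Delta_{\delbar}(1+\Delta_{\delbar})\phi=0$, and Theorem \ref{ell-reg} applies. Similarly, for the last assertion your phrase ``uses orthogonality to conclude $\phi_1,\phi_2\in A^{p,q}$'' is too loose: orthogonality alone does not give smoothness. The paper's argument (and the Bott-Chern argument you allude to) is that $\phi_1$ is a weak solution of $\Delta_{\delbar}\phi_1=\delbar\delbar^*\phi$ and $\phi_2$ of $\Delta_{\delbar}\phi_2=\delbar^*\delbar\phi$, both with smooth right-hand side since $\phi$ is smooth, whence elliptic regularity applies to each piece separately.
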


This notes are divided in the following way. In section \ref{hilb}, we briefly recall some concepts from the theory of unbounded operators on a Hilbert space, introduce the maximal and minimal extension of differential operator on a manifold, and state the classical results of elliptic regularity which will be useful in the following.
In section \ref{bg material}, we set the notation of complex and K\"ahler manifolds, and recall the definitions and the main properties of the differential operators which will be studied later.
In section \ref{norms}, we describe four $W^{1,2}$ norms of differential $(p,q)$-forms, which turn to be equivalent on K\"ahler manifolds with bounded sectional curvature. 
In section \ref{decomp}, we prove a rule of integration by parts for the $W^{1,2}$ inner product introduced above, from which we derive our main results, Theorem \ref{thm-main-1} and Theorem \ref{thm-main-2}.
Finally, in section \ref{decomp-compl}, we highlight the relation between these $W^{1,2}$ weak decompositions and the spaces of $W^{1,2}$ Bott-Chern or Dolbeault harmonic forms, on complete K\"ahler manifolds with bounded sectional curvature. We also generalize, to the non compact case, the well known property that on compact K\"ahler manifolds the kernel of the Dolbeault Laplacian and the kernel of the Bott-Chern Laplacian coincide.

We remark that the K\"ahler condition is fundamental for the kind of proof of a $W^{1,2}$ weak Bott-Chern or Dolbeault decomposition presented in this work. It would be interesting to understand if a $W^{1,2}$ weak Bott-Chern or Dolbeault decomposition can be determined in full generality for Hermitian manifolds.

\medskip\medskip
\noindent{\em Acknowledgments.} The author would like to thank Adriano Tomassini, for having introduced this really interesting topic to me, and for many useful discussions in the last years.
The author is also sincerely grateful to Francesco Bei, for his kindness and avaiability, and for having answered a lot of my questions.

\section{Unbounded operators on Hilbert spaces and elliptic regularity}\label{hilb}
We briefly recall some concepts from the theory of unbounded operators on a Hilbert space. If $\H$ is a Hilbert space, the graph of a linear operator $P:\H\to\H$ with domain $\D(P)$ is the set $\{(x,Px)\in\H\times\H\,|\,x\in\D(P)\}$. An operator is \emph{closed} if its graph is a closed subset of $\H\times\H$. By the closed graph theorem, an everywhere defined operator with a closed graph is automatically bounded, therefore when dealing with unbounded operators we need to also keep track of their domain.

An \emph{extension} of $P$ is an operator $P'$ such that $\D(P)\subset\D(P')$ and $Px=P'x$ for every $x\in\D(P)$. An operator is \emph{closable} if the closure of its graph is the graph of a linear operator.

If $\D(P)$ is dense in $\H$, then we say that $P$ is a \emph{densely defined} operator, and we can define the \emph{adjoint} of $P$, indicated by $P^t$. Its domain is
\begin{equation*}
\D(P^t):=\{y\in\H\,|\,x\mapsto\la Px,y\ra\text{ is continuous on }\D(P)\},
\end{equation*}
where here $\la\cdot,\cdot\ra$ denotes the Hermitian inner product of the Hilbert space. If $y\in\D(P^t)$, then $P^ty$ is defined by the relation
\begin{equation*}
\la Px,y\ra=\la x,P^ty\ra\ \forall x\in\D(P).
\end{equation*}
This definition makes $P^t$ a closed operator. If $P$ is closed, then $P^t$ is densely defined and $P^{tt}=P$.

An operator is \emph{symmetric} if $\la Px,y\ra=\la x,Py\ra$ whenever $x,y\in\D(P)$, and \emph{self adjoint} if moreover $\D(P)=\D(P^t)$. A symmetric operator is always closable since its adjoint is a closed extension. An operator is \emph{essentially self adjoint} if it has a unique closed self adjoint extension.

Let $M$ be a differentiable manifold of dimension $m$, and let $E,F$ be $\C$-vector bundles over $M$, with $\rank E=r$, $\rank F=s$.

A $\C$-linear \emph{differential operator} of order $l$ from $E$ to $F$ is a $\C$-linear operator $P:\Gamma(M,E)\to \Gamma(M,F)$ of the form
\begin{equation*}
Pu(x)=\sum_{\lv\alpha\rv\le l}a_\alpha(x)D^\alpha u(x)\ \ \ \forall x\in\Omega,
\end{equation*}
where $E_{|\Omega}\simeq\Omega\times\C^r$, $F_{|\Omega}\simeq\Omega\times\C^s$ are trivialized locally on some open chart $\Omega\subset M$ equipped with local coordinates $x^1,\dots,x^{m}$, and the functions 
\begin{equation*}
a_\alpha(x)=(a_{\alpha ij}(x))_{1\le i\le s,1\le j\le r}
\end{equation*}
are $s\times r$ matrices with smooth coefficients on $\Omega$. Here 
\begin{equation*}
D^\alpha=(\del/\del x^1)^{\alpha_1}\dots(\del/\del x^{m})^{\alpha_{m}},
\end{equation*}
and $u=(u_j)_{1\le j\le r}$, $D^\alpha u=(D^\alpha u_j)_{1\le j\le r}$ are viewed as column matrices. Moreover, we require $a_\alpha\nequiv0$ for some open chart $\Omega\subset M$ and for some $\lv\alpha\rv= l$.

Let $P:\Gamma(M,E)\to \Gamma(M,F)$ be a $\C$-linear differential operator of order $l$ from $E$ to $F$. The \emph{principal symbol} of $P$ is the operator
\begin{equation*}
\sigma_P:T^*M\to \Hom(E,F)\ \ \ (x,\xi)\mapsto\sum_{\lv\alpha\rv= l}a_\alpha(x)\xi^\alpha.
\end{equation*}
We say that $P$ is \emph{elliptic} if $\sigma_P(x,\xi)\in\Hom(E_x,F_x)$ is an isomorphism for every $x\in M$ and $0\ne\xi\in T_x^*M$.

Let $(M,g)$ a Riemannian manifold of dimension $m$. Assume, for simplicity, the manifold is oriented, and consider the standard Riemannian volume form locally given by
\begin{equation*}
\vol(x)=|\det g_{ij}(x)|^{\frac12}dx^1\dots dx^m,
\end{equation*}
where $g(x)=\sum g_{ij}(x)dx^i\otimes dx^j$ for local coordinates $x^1,\dots,x^m$.
Let $E$ be a $\C$-vector bundle over $M$, and take a Hermitian metric $h$ over $E$, i.e., a smooth section of Hermitian inner products on the fibers. The couple $(E,h)$, or simply $E$, will be called a \emph{Hermitian vector bundle}. We define the Hilbert space $L^pE$, $p\ge1$, of global sections $u$ of $E$ with measurable coefficients and finite $L^p$ norm, i.e.,
\begin{equation*}
\lV u\rV_{L^p}:=\left(\int_M| u(x)|^p\vol(x)\right)^{\frac1{p}}<+\infty,
\end{equation*}
where $|\cdot|=(\langle\cdot,\cdot\rangle)^\frac12$ and $\langle\cdot,\cdot\rangle$ is the Hermitian metric on $E$. Note that $L^pE$ can be seen as the completion of $\Gamma_c(M,E)$, the set of smooth section with compact support, with respect to the $L^p$ norm.  We denote by $L^p_{loc}E$ the space of global sections $u$ of $E$ with measurable coefficients such that $fu\in L^pE$ for every smooth function $f\in\cinf_c(M)$ with compact support.
For $p=2$, we denote the corresponding global $L^2$ inner product by 
\begin{equation*}
\la u,v\ra:=\int_M\langle u(x),v(x)\rangle\vol(x).
\end{equation*}
The space $L^2E$ together with $\la\cdot,\cdot\ra$ is an Hilbert space. Denote by $\lv\cdot\rv$ the $L^2$ norm $\lv\cdot\rv_{L^2}$.

Let $E,F$ be Hermitian vector bundles, and let $P:\Gamma(M,E)\to \Gamma(M,F)$ be a differential operator. We define the \emph{formal adjoint}
\begin{equation*}
P^*:\Gamma(M,F)\to \Gamma(M,E)
\end{equation*}
of $P$ by requiring that for all smooth sections $u\in\Gamma(M,E)$ and $v\in\Gamma(M,F)$, then
\begin{equation*}
\la Pu,v\ra=\la u,P^*v\ra
\end{equation*}
whenever $\supp u\cap\supp v$ is compactly contained in $M$.

We remark that the formal adjoint $P^*$ is a differential operator, it always exists and it is unique, see e.g. \cite[Chapter VI, Definition 1.5]{De}. Note that $T^{**}=T$.

Let $E,F$ be Hermitian vector bundles, and let $P:\Gamma(M,E)\to \Gamma(M,F)$ be a differential operator. Then it defines an unbounded linear operator $\tilde{P}:L^2E\to L^2F$ which is densely defined and closable. It is densely defined since its domain cointains the set of smooth sections with compact support $\Gamma_c(M,E)$, and we are going to show two canonical closed extensions of $P$. The \emph{minimal closed extension} $P_{min}$, or \emph{strong extension} $P_{s}$, is defined by taking the closure of the graph of $P$, i.e.,
\begin{equation*}
\D(P_s):=\{u\in L^2E\,|\,\exists\{u_j\}_j\subset \Gamma_c(M,E),\ \exists v\in L^2E,\ u_j\to u,\ Pu_j\to v\},
\end{equation*}
and $P_s(u):=v$. The \emph{maximal closed extension} $P_{max}$, or \emph{weak extension} $P_w$, is defined by letting $P$ act distributionally, i.e.,
\begin{equation*}
\D(P_w):=\{u\in L^2E\,|\,\exists v\in L^2E,\ \forall w\in \Gamma_c(M,E)\ \la v,w\ra=\la u,P^*w\ra\},
\end{equation*}
and $P_w(u):=v$. Note that $\D(P_s)\subset\D(P_w)$. Moreover, it is easy to see $(P^*)^t=P_w$. A densely defined operator and its minimal closed extension have the same adjoint, \cite[Theorem VIII.1]{RS}, therefore $((P^*)_s)^t=P_w$, implying
\begin{equation*}\label{strong-weak}
(P^*)_s=(P_w)^t,\ \ \ (P^*)_w=(P_s)^t.
\end{equation*}
Then, a \emph{formally self adjoint} operator, i.e., $P=P^*$, is essentially self adjoint if and only if $P_s=P_w$, see \cite[Page 256]{RS} for a proof.

Finally, we state the following result about elliptic regularity, for which proof we refer to \cite[Corollary 10.3.10]{N}.
Let $E,F$ be Hermitian vector bundles, and let $P:\Gamma(M,E)\to \Gamma(M,F)$ be a differential operator.
We say that the section $u$ is a \emph{weak solution} of $Pu=v$ if $u,v\in L^1_{loc}(E)$ and 
\begin{equation*}
\la u,P^*w\ra=\la v,w\ra\ \ \ \forall w\in\Gamma_c(M,F).
\end{equation*}
\begin{theorem}\label{ell-reg}
Let $(M,g)$ a orientable Riemannian manifold, and let $E,F$ be Hermitian vector bundles over $M$. Let $P:\Gamma(M,E)\to \Gamma(M,F)$ be an elliptic differential operator. If $u\in L^1_{loc}E$, $u$ is a weak solution of $Pu=v$ and $v$ is smooth, then $u$ must be smooth.
\end{theorem}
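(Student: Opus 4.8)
Since smoothness is a local property, it suffices to prove that $u$ is smooth on a neighbourhood of an arbitrary point $x_0\in M$. Fixing a coordinate chart $\Omega$ around $x_0$ over which $E$ and $F$ are trivialized, the statement reduces to the following local assertion: $P$ is an elliptic differential operator of order $l$ on an open set $\Omega\subset\R^m$, acting on $\C^r$-valued functions with smooth matrix coefficients; $u$ is a vector-valued distribution on $\Omega$ (here one uses $L^1_{loc}\subset\mathcal{D}'$); $Pu=v$ holds in $\mathcal{D}'(\Omega)$, where $v\in\cinf$; one must conclude $u\in\cinf$ near $x_0$. I fix cutoffs $\varphi,\psi\in\cinf_c(\Omega)$ with $\varphi\equiv1$ near $x_0$ and $\psi\equiv1$ near $\supp\varphi$.

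The plan is to construct a local parametrix for $P$. By ellipticity, the principal symbol $\sigma_P(x,\xi)$ is invertible for every $\xi\ne0$, so the full symbol $p(x,\xi)$ of $P$ is invertible for $|\xi|$ large, uniformly for $x$ in compact subsets of $\Omega$. Choosing $\chi\in\cinf(\R^m)$ which vanishes near the origin and equals $1$ for $|\xi|\ge1$, the matrix-valued function $q_0(x,\xi):=\chi(\xi)\,p(x,\xi)^{-1}$ is a symbol of order $-l$; the associated pseudodifferential operator $Q_0$ satisfies $Q_0P=I+R_0$ with $R_0$ of order $-1$. Asymptotically summing the formal Neumann series $\sum_{k\ge0}(-R_0)^k$, one upgrades $Q_0$ to a properly supported pseudodifferential operator $Q$ of order $-l$ with $QP=I+R$, where $R$ is smoothing (order $-\infty$). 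This is the standard local parametrix construction for elliptic operators, which one may also simply invoke.

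Now apply this near $x_0$. The compactly supported distribution $\psi u$ lies in $H^{-N}$ for some $N$. Applying $Q$ to the localized equation and using $QP=I+R$ together with the pseudolocality of $Q$ and the choice $\psi\equiv1$ near $\supp\varphi$, one obtains, on a neighbourhood of $x_0$, an identity of the form $\varphi u=\varphi Q(\psi v)-\varphi R(\psi u)+(\text{smoothing operators applied to }\psi u)$. Here $\varphi Q(\psi v)$ is smooth because $Q$ has order $-l$ and $\psi v\in\cinf$, while $\varphi R(\psi u)$ and the remaining terms are smooth because $R$ and the remaining correction terms are smoothing and $\psi u\in H^{-N}$. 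Hence $\varphi u\in\cinf$, so $u$ is smooth near $x_0$, and since $x_0$ was arbitrary, $u\in\cinf$.

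The main obstacle is the rigorous setup of enough pseudodifferential calculus — the construction of the parametrix $Q$ and the verification of its Sobolev mapping properties and pseudolocality; once this is available, the regularity statement follows by the single-step argument above, with no genuine bootstrap needed since $v$ is already $\cinf$. A minor technical point is the weak hypothesis $u\in L^1_{loc}$, which is absorbed at the outset via the remark that a compactly supported distribution automatically lies in some $H^{-N}$. An alternative, more elementary route — the one followed in \cite{N} — dispenses with the parametrix in favour of interior elliptic a priori estimates combined with difference-quotient (or Friedrichs mollifier) arguments, which upgrade $u\in H^s_{loc}$ to $u\in H^{s+l}_{loc}$ using only $Pu=v\in\cinf$, iterating over all Sobolev orders and concluding by Sobolev embedding.
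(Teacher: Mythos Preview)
The paper does not actually prove this statement: it only states the result and refers to \cite[Corollary 10.3.10]{N} for the proof. Your proposal --- localizing, constructing a parametrix via the standard pseudodifferential calculus, and reading off smoothness from $QP=I+R$ with $R$ smoothing --- is a correct and standard argument for elliptic regularity; you even note at the end the alternative route (interior a priori estimates plus mollifier/difference-quotient bootstrap) that the cited reference \cite{N} follows. So there is nothing to compare against in the paper itself, and your sketch is sound.
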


\section{Complex and K\"ahler manifolds}\label{bg material}
Let $(M,J,g,\omega)$ be a Hermitian manifold of complex dimension $n$, where $M$ is a smooth manifold of real dimension $2n$, $J$ is a complex structure on $M$, $g$ is a $J$-invariant Riemannian metric on $M$, and $\omega$ denotes the fundamental $(1,1)$-form associated to the metric $g$. We denote by $h$ the Hermitian extension of $g$ on the complexified tangent bundle $T^\C M=TM\otimes_\R\C$, and by the same symbol $g$ the $\C$-bilinear symmetric extension of $g$ on $T^\C M$. Also denote by the same symbol $\omega$ the $\C$-bilinear extension of the fundamental form $\omega$ of $g$ on $T^\C M$. Recall that $h(u,v)=g(u,\bar{v})$ for all $u,v\in T^{1,0}M$, and $\omega(u,v)=g(Ju,v)$ for all $u,v\in TM$.
We denote by $ A^r$ the space of $r$-forms with real values $\Gamma(\Lambda^rM)$, and by $ A^{p,q}$ the space of $(p,q)$-forms with complex values $\Gamma(\Lambda^{p,q}M)$.
We will consider only manifolds without boundary.

Let $(M,J,g,\omega)$ be a Hermitian manifold of dimension $n$ and let $\vol=\frac{\omega^n}{n!}$ be the standard volume form. We consider $M$ endowed with the corresponding Riemannian measure. Given a (possibly non smooth) measurable $(p,q)$-form $\phi$, the pointwise norm $|\phi|$ is defined as $|\phi|=\langle\varphi,\varphi\rangle^\frac12$, where $\langle\cdot,\cdot\rangle$ is the pointwise Hermitian inner product induced by $g$ on the space of $(p,q)$-forms. More generally, we define in the same way $|\cdot|$ and $\langle\cdot,\cdot\rangle$ on tensors. Then, $L^2 A^{p,q}$ is defined as the space of measurable $(p,q)$-forms such that
\begin{equation*}
\lVert \phi\rVert:=\Big(\int_M\vert\phi\vert^2\vol\Big)^\frac12<\infty.
\end{equation*}
The space $L^2 A^{p,q}$, together with the Hermitian product
\begin{equation*}
\la\varphi,\psi\ra :=\int_M\langle\varphi,\psi\rangle \vol,
\end{equation*}
is a Hilbert space. The space $L^2 A^{p,q}$ can be also seen as the completion of $ A^{p,q}_c$, the space of smooth $(p,q)$-forms with compact support, with respect to the norm $\lVert\cdot\rVert$. Again, more generally, we define in the same way $\lv\cdot\rv$ and $\la\cdot,\cdot\ra$ on tensors.

For any given tensor $\varphi$, we also set 
\begin{equation*}
\lVert \varphi\rVert_{L^\infty}:=\sup_{M}\vert\varphi\vert,
\end{equation*}
and we call $\varphi$ {\em bounded} if $
\lVert \varphi\rVert_{L^\infty}<\infty$.

Denoting by $*: A^{p,q}\to  A^{n-p,n-q}$ the complex anti-linear Hodge operator 
associated with $g$, we recall the definitions of the following well known $2$-nd order elliptic and formally self adjoint differential operators
\begin{equation*}
\Delta_{d}:=dd^*+d^*d,\ \ \ \Delta_{\delbar}:=\delbar\delbar^*+\delbar^*\delbar,\ \ \ \Delta_{\del}:=\del\del^*+\del^*\del,
\end{equation*}
which are respectively called {\em Hodge Laplacian}, {\em Dolbeault Laplacian}, and {\em $\del$-Laplacian}, where, as usual
\begin{equation*}
\del^*:=-*\del *,\qquad \delbar^*:=-*\delbar*, \qquad d^*=-*d*,
\end{equation*}
are the formal adjoints respectively of $\del,\delbar,d$.
Moreover, the {\em Bott-Chern Laplacian} and {\em Aeppli Laplacian} 
$\tilde\Delta_{BC}$ and $ \tilde\Delta_{A}$ are the $4$-th order elliptic and formally self adjoint differential operators defined respectively as
\begin{equation*}
\tilde\Delta_{BC} \;:=\;
\del\delbar\delbar^*\del^*+
\delbar^*\del^*\del\delbar+\del^*\delbar\delbar^*\del+\delbar^*\del\del^*\delbar
+\del^*\del+\delbar^*\delbar
\end{equation*}
and
\begin{equation*}
 \tilde\Delta_{A} \;:=\; \del\delbar\delbar^*\del^*+
\delbar^*\del^*\del\delbar+
\del\delbar^*\delbar\del^*+\delbar\del^*\del\delbar^*+
\del\del^*+\delbar\delbar^*.
\end{equation*}
Bott-Chern and Aeppli Laplacians are linked by the duality relation
\begin{equation*}
 *\tilde\Delta_{A}=\tilde\Delta_{BC}*,\ \ \ *\tilde\Delta_{BC}=\tilde\Delta_{A}*.
\end{equation*}
We will be only interested in studying differential $(p,q)$-forms lying in the kernel of the Bott-Chern Laplacian. The same study can be done for the Aeppli Laplacian, using this duality relation when necessary.

If $(M,J,g,\omega)$ is a K\"ahler manifold, i.e., $d\omega=0$, then the Bott-Chern Laplacian and the Aeppli Laplacian can be written in a more concise form. Indeed, by K\"ahler identities, see e.g. \cite[Chapter VI, Theorem 6.4]{De}, we know that $\del$ and $\delbar^*$ anticommute, as well as $\del^*$ and $\delbar$. Moreover, it follows $\Delta_{d}=2\Delta_{\del}=2\Delta_{\delbar}$. Therefore, we derive
\begin{equation*}
\tilde\Delta_{BC}=\Delta_{\delbar}\Delta_{\delbar}+
\del^*\del+\delbar^*\delbar
\end{equation*}
and
\begin{equation*}
 \tilde\Delta_{A}= \Delta_{\delbar}\Delta_{\delbar}+
\del\del^*+\delbar\delbar^*.
\end{equation*}

In the following we will make use of normal holomorphic coordinates on K\"ahler manifolds. We recall that, if $(M,J,g,\omega)$ is a Hermitian manifold, then $g$ is K\"ahler iff
for every $z_0\in M$ there exist local complex coordinates $z^1,\dots,z^n$ centred in $z_0$ such that $g=g_{i\bar{j}}dz^i\otimes d\bar{z}^j+g_{i\bar{j}}d\bar{z}^j\otimes dz^i$ and $g_{i\bar{j}}=\delta_{ij}+[2]$, where $[2]$ indicates terms of order $\ge 2$, which is equivalent to say
\begin{equation*}
\de{g_{i\bar{j}}}{z^k}(z_0)=\de{g_{i\bar{j}}}{\bar{z}^k}(z_0)=0\ \ \forall i,j,k=1,\dots,n.
\end{equation*}

\section{Sobolev spaces on K\"ahler manifolds}\label{norms}

Let $(M,J,g,\omega)$ be a Hermitian manifold of complex dimension $n$. Denote by $\nabla$ the Levi-Civita connection.
On the space of $(p,q)$-forms with compact support $ A^{p,q}_c$ let us consider the following global Hermitian inner products:
\begin{gather*}
\la\alpha,\beta\ra_1:=\la\alpha,\beta\ra+\la\nabla\alpha,\nabla\beta\ra,\\
\la\alpha,\beta\ra_2:=\la\alpha,\beta\ra+\la\delbar\alpha,\delbar\beta\ra+\la\delbar^*\alpha,\delbar^*\beta\ra,\\
\la\alpha,\beta\ra_3:=\la\alpha,\beta\ra+\la\del\alpha,\del\beta\ra+\la\del^*\alpha,\del^*\beta\ra,\\
\la\alpha,\beta\ra_4:=\la\alpha,\beta\ra+\frac12\la\delbar\alpha,\delbar\beta\ra+\frac12\la\delbar^*\alpha,\delbar^*\beta\ra+\frac12\la\del\alpha,\del\beta\ra+\frac12\la\del^*\alpha,\del^*\beta\ra.
\end{gather*}
Denote by $\lv\cdot\rv_i$ the norms defined by $\la\cdot,\cdot\ra_i^\frac12$, for $i=1,2,3,4$.
Define the Sobolev space $W_i^{1,2} A^{p,q}$ as the completion of $ A^{p,q}_c$ with respect to the norms $\lv\cdot\rv_i$, for $i=1,2,3,4$.
By section \ref{hilb}, we may write $W^{1,2}_2 A^{p,q}=\D(\delbar_s)\cap\D((\delbar^*)_s)$, $W^{1,2}_3 A^{p,q}=\D(\del_s)\cap\D((\del^*)_s)$, $W^{1,2}_4 A^{p,q}=W^{1,2}_2 A^{p,q}\cap W^{1,2}_3 A^{p,q}$.

\begin{remark}
Let us consider, on a Hermitian manifold $(M,J,g,\omega)$, possibly non K\"ahler, the Hilbert space $W^{1,2}_2 A^{p,q}$ just defined, as in \cite[Section 2]{AV}. If the Hermitian metric is complete, Andreotti and Vesentini, \cite[Proposition 5]{AV}, proved that $W^{1,2}_2 A^{p,q}$ can be identified with the space of forms $\phi\in L^2 A^{p,q}$ which admit $\delbar\phi\in L^2 A^{p,q+1}$ and $\delbar^*\phi\in L^2 A^{p,q-1}$ in the sense of distributions, i.e., forms $\phi\in L^2 A^{p,q}$ such that there exist $\alpha\in L^2 A^{p,q+1}$ and $\beta\in L^2 A^{p,q-1}$ such that for every $\gamma\in A^{*,*}_c$
\begin{gather*}
\la\alpha,\gamma\ra=\la\phi,\delbar^*\gamma\ra,\\
\la\beta,\gamma\ra=\la\phi,\delbar\gamma\ra.
\end{gather*}
By section \ref{hilb}, this is equivalent to say $W^{1,2}_2 A^{p,q}=\D(\delbar_w)\cap\D((\delbar^*)_w)$. Moreover, they proved that if $\phi\in W^{1,2}_2 A^{p,q}$, then $\delbar_s\phi=\delbar_w\phi$ and $(\delbar^*)_s\phi=(\delbar^*)_w\phi$.
With analogue proofs, we also derive $W^{1,2}_3 A^{p,q}=\D(\del_w)\cap\D((\del^*)_w)$, and if $\phi\in W^{1,2}_3 A^{p,q}$, then $\del_s\phi=\del_w\phi$ and $(\del^*)_s\phi=(\del^*)_w\phi$.
Mutatis mutandis, the same holds also for $W^{1,2}_4 A^{p,q}$.
\end{remark}

Note that on $ A_c^{p,q}$, integrating by parts, we have
\begin{gather}\label{eq-norm-lapl}
\la\alpha,\beta\ra_2=\la\alpha,\beta\ra+\la\alpha,\Delta_{\delbar}\beta\ra,\\
\label{eq-norm-lapl-3}\la\alpha,\beta\ra_3=\la\alpha,\beta\ra+\la\alpha,\Delta_{\del}\beta\ra,\\
\label{eq-norm-lapl-4}\la\alpha,\beta\ra_4=\la\alpha,\beta\ra+\frac12\la\alpha,\Delta_{\del}\beta+\Delta_{\delbar}\beta\ra.
\end{gather}

Now, if we assume that $g$ is K\"ahler, then $\Delta_{d}=2\Delta_{\del}=2\Delta_{\delbar}$ by K\"ahler identities, and
$\la\cdot,\cdot\ra_2=\la\cdot,\cdot\ra_3=\la\cdot,\cdot\ra_4$.
Thus, the norms $\lv\cdot\rv_2$, $\lv\cdot\rv_3$ and $\lv\cdot\rv_4$ are equal on $ A^{p,q}_c$.
Therefore,
\begin{equation*}
W^{1,2}_2A^{p,q}=W^{1,2}_3A^{p,q}=W^{1,2}_4A^{p,q},
\end{equation*}
and the couples $(W^{1,2}_iA^{p,q},\la\cdot,\cdot\ra_i)$ are the same Hilbert spaces for $i=2,3,4$, when $g$ is K\"ahler. 
\begin{remark}
In the following, when proving a result for $\la\cdot,\cdot\ra_j$, with $j$ equal to $2,3$ or $4$ on a K\"ahler manifold, it means the result holds the same way also for $\la\cdot,\cdot\ra_i$, with $i=2,3,4$.
\end{remark}

We now focus out attention on the relation between $\lv\cdot\rv_1$ and $\lv\cdot\rv_i$ for $i=2,3,4$.
Let $(M,J,g,\omega)$ be a K\"ahler manifold of complex dimension $n$.
For any given $\phi\in A^{p,q}$, and for 
\begin{equation*}
A_p=(\alpha_1,\ldots,\alpha_p),\qquad 
B_q=(\beta_1,\ldots,\beta_q)
\end{equation*}
multiindices of length $p$, $q$ respectively, with 
$\alpha_1<\cdots <\alpha_p$ and $\beta_1<\cdots <\beta_q$, write
\begin{equation*}
\phi=\sum_{A_p, B_q}\psi_{A_p\c{B_q}}dz^{A_p}\wedge d\c{z}^{B_q}
\end{equation*}
in local complex coordinates.
Using local normal holomorphic coordinates at $z_0\in M$, we have
\begin{equation*}\label{nabla-normal}
|\nabla\varphi|^2(z_0)=2\sum_{A_p, B_q}\sum_{\gamma=1}^n
\biggl(\biggl|\de{\varphi_{A_p\c{B_q}}}{z^\gamma}\biggr|^2+\biggl|\de{\varphi_{A_p\c{B_q}}}{\c{z}^\gamma}\biggr|^2\biggl)(z_0).
\end{equation*}
Moreover,
\begin{gather*}
\label{del-delbar-normal}
\del\phi(z_0)=\sum_{A_p, B_q}\sum_{\gamma\notin A_p}\de{\varphi_{A_p\c{B_q}}}{z^\gamma}(z_0)dz^{\gamma A_p \c{B_q}}(z_0),\\ 
\delbar\phi(z_0)=\sum_{A_p, B_q}\sum_{\gamma\notin B_q}\de{\varphi_{A_p\c{B_q}}}{\c{z}^\gamma}(z_0)dz^{\c\gamma A_p \c{B_q}}(z_0),\\
\label{del-delbar-star-normal}
\del^*\phi(z_0)=-\sum_{A_p, B_q}\sum_{\gamma\in A_p}\de{\varphi_{\gamma A_p\setminus\{\gamma\}\c{B_q}}}{\c{z}^\gamma}(z_0)dz^{A_p\setminus\{\gamma\} \c{B_q}}(z_0),\\ 
\delbar^*\phi(z_0)=-(-1)^{p}\sum_{A_p, B_q}\sum_{\gamma\in B_q}\de{\varphi_{A_p\c\gamma\c{{B_q}}\setminus\{\c\gamma\}}}{z^\gamma}(z_0)dz^{A_p \c{B_q}\setminus\{\c\gamma\}}(z_0),
\end{gather*}
where the signs of the last two equations can be deduced, e.g., from \cite[Chapter 3, Proposition 2.3]{KM}.
By the previous equations, it follows that $\exists C>0$ depending only on $p,q,n$, such that
\begin{equation}\label{nabla-est}
(|\del\phi|^2+|\delbar\phi|^2+|\del^*\phi|^2+|\delbar^*\phi|^2)(z_0)\le C|\nabla\varphi|^2(z_0).
\end{equation}
Summing up, from equation (\ref{nabla-est}) we get the following result.
\begin{lemma}\label{ineq-kahler}
Let $(M,J,g,\omega)$ be a K\"ahler manifold. Then, $\exists C>0$ such that for all $\phi\in A_c^{p,q}(M)$
\begin{equation}\label{nabla-del}
\lv\phi\rv_4\le C\lv\phi\rv_1.
\end{equation}
\end{lemma}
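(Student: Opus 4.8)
The plan is to establish the pointwise inequality (\ref{nabla-est}) and then integrate it over $M$. The statement of Lemma \ref{ineq-kahler} is essentially a global integrated version of the pointwise estimate that has just been derived in normal holomorphic coordinates, so the proof should be short.

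First I would fix $\phi \in A^{p,q}_c(M)$ and an arbitrary point $z_0 \in M$. Since $g$ is K\"ahler, I may choose local normal holomorphic coordinates centred at $z_0$, and in these coordinates the formulas displayed just before the lemma give the explicit expressions for $\del\phi$, $\delbar\phi$, $\del^*\phi$, $\delbar^*\phi$ and for $|\nabla\phi|^2$ at the point $z_0$. Each of the four operators, evaluated at $z_0$, is a sum of certain first partial derivatives $\de{\phi_{A_p\c{B_q}}}{z^\gamma}(z_0)$ or $\de{\phi_{A_p\c{B_q}}}{\c z^\gamma}(z_0)$ of the component functions (up to signs, which do not affect absolute values), while $|\nabla\phi|^2(z_0)$ is, up to the factor $2$, the sum of the squared absolute values of \emph{all} such first derivatives over all multiindices and all $\gamma$. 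Hence, by the triangle inequality and Cauchy--Schwarz applied to the pointwise Hermitian norms on forms, the sum $(|\del\phi|^2+|\delbar\phi|^2+|\del^*\phi|^2+|\delbar^*\phi|^2)(z_0)$ is bounded by a constant $C = C(p,q,n)$ times $|\nabla\phi|^2(z_0)$; this is exactly (\ref{nabla-est}), and the constant depends only on $p,q,n$ because it only counts how many terms appear, which is a combinatorial quantity. Since $z_0$ was arbitrary, the inequality holds pointwise everywhere on $M$.

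Next I would integrate (\ref{nabla-est}) against the volume form $\vol = \frac{\omega^n}{n!}$ over $M$; the integrals are finite because $\phi$ has compact support. This yields
\begin{equation*}
\lv\del\phi\rv^2+\lv\delbar\phi\rv^2+\lv\del^*\phi\rv^2+\lv\delbar^*\phi\rv^2 \le C\,\lv\nabla\phi\rv^2.
\end{equation*}
Adding $\lv\phi\rv^2$ to both sides and enlarging $C$ if necessary (to absorb the coefficients $\tfrac12$ in the definition of $\la\cdot,\cdot\ra_4$ and the $\lv\phi\rv^2$ term, using $\lv\phi\rv^2 \le \lv\phi\rv^2 + \lv\nabla\phi\rv^2 = \lv\phi\rv_1^2$), I obtain
\begin{equation*}
\lv\phi\rv_4^2 = \lv\phi\rv^2 + \tfrac12\bigl(\lv\delbar\phi\rv^2+\lv\delbar^*\phi\rv^2+\lv\del\phi\rv^2+\lv\del^*\phi\rv^2\bigr) \le C'\,\bigl(\lv\phi\rv^2 + \lv\nabla\phi\rv^2\bigr) = C'\,\lv\phi\rv_1^2,
\end{equation*}
and taking square roots gives (\ref{nabla-del}) with $C = \sqrt{C'}$.

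I do not expect a serious obstacle here; the only mild subtlety is bookkeeping the combinatorial constant and checking that it genuinely depends only on $p,q,n$ and not on the point or the metric — but this is immediate from the structure of the coordinate formulas, since normal holomorphic coordinates make the metric coefficients equal to $\delta_{ij}$ at $z_0$, so that the pointwise norm of a form at $z_0$ is just the Euclidean norm of its component array. One should perhaps note explicitly that (\ref{nabla-est}) was already established in the text preceding the lemma, so that the proof of Lemma \ref{ineq-kahler} reduces to the integration step above.
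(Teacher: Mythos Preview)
Your proposal is correct and follows exactly the paper's approach: the pointwise estimate (\ref{nabla-est}) is derived in normal holomorphic coordinates just before the lemma, and the proof of Lemma \ref{ineq-kahler} is simply the integration step you describe. Your observation that the constant depends only on $p,q,n$ because normal coordinates reduce the pointwise norms to Euclidean norms is precisely the point.
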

The converse inequality turns out to hold when the sectional curvature is bounded.
\begin{lemma}\label{equiv-kahler}
Let $(M,J,g,\omega)$ be a K\"ahler manifold. Assume that the sectional curvature is bounded. 
Then $\lV\cdot\rV_1$ and $\lV\cdot\rV_4$ are equivalent.
\end{lemma}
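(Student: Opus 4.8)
The plan is to establish the converse of Lemma \ref{ineq-kahler}: there exists $C'>0$ such that $\lv\phi\rv_1\le C'\lv\phi\rv_4$ for every $\phi\in A_c^{p,q}(M)$. Together with (\ref{nabla-del}) this gives the equivalence of $\lv\cdot\rv_1$ and $\lv\cdot\rv_4$. Since $g$ is K\"ahler, $\lv\cdot\rv_4=\lv\cdot\rv_2$ on $A^{p,q}_c$, so it suffices to bound $\lv\nabla\phi\rv^2$ by a constant (independent of $\phi$) times $\lv\phi\rv_2^2=\lv\phi\rv^2+\lv\delbar\phi\rv^2+\lv\delbar^*\phi\rv^2$.

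The tool is the classical Bochner--Weitzenb\"ock formula for the Hodge Laplacian on $k$-forms,
\[
\Delta_d=\nabla^*\nabla+\mathcal{R},
\]
where $\mathcal{R}$, the Weitzenb\"ock curvature operator, is the bundle endomorphism of $\Lambda^\bullet T^*M$ obtained from the Riemann curvature tensor by a universal algebraic contraction (for $1$-forms it is the Ricci endomorphism). Since the Levi-Civita connection of a K\"ahler metric is complex linear and compatible with the Hermitian structure, this identity holds verbatim on $\Lambda^{p,q}M$, where $\mathcal{R}$ is fibrewise self adjoint. The first step is to observe that bounded sectional curvature forces $\lv\mathcal{R}\rv_{L^\infty}<+\infty$: in any local $g$-orthonormal frame each component of the curvature tensor is bounded by a dimensional constant times the supremum of the absolute value of the sectional curvature, and $\mathcal{R}$ is a fixed linear combination of such components.

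Next I would pair the Weitzenb\"ock identity with $\phi\in A_c^{p,q}(M)$ and integrate; the compact support gives $\la\nabla^*\nabla\phi,\phi\ra=\lv\nabla\phi\rv^2$, hence $\la\Delta_d\phi,\phi\ra=\lv\nabla\phi\rv^2+\la\mathcal{R}\phi,\phi\ra$. On a K\"ahler manifold $\Delta_d=2\Delta_{\delbar}$, and by (\ref{eq-norm-lapl}) we have $\la\Delta_{\delbar}\phi,\phi\ra=\lv\delbar\phi\rv^2+\lv\delbar^*\phi\rv^2$. Therefore
\[
\lv\nabla\phi\rv^2=2\big(\lv\delbar\phi\rv^2+\lv\delbar^*\phi\rv^2\big)-\la\mathcal{R}\phi,\phi\ra\le 2\big(\lv\delbar\phi\rv^2+\lv\delbar^*\phi\rv^2\big)+\lv\mathcal{R}\rv_{L^\infty}\lv\phi\rv^2,
\]
and adding $\lv\phi\rv^2$ together with $\lv\cdot\rv_2=\lv\cdot\rv_4$ yields $\lv\phi\rv_1^2\le\max\{2,\,1+\lv\mathcal{R}\rv_{L^\infty}\}\,\lv\phi\rv_4^2$, the desired estimate.

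The only point that is not purely formal is the boundedness of the Weitzenb\"ock curvature operator on $(p,q)$-forms in terms of the sectional curvature, which is where the curvature hypothesis is used; everything else is integration by parts valid for compactly supported smooth forms, so completeness of $g$ is irrelevant for this lemma.
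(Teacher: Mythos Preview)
Your proof is correct and follows essentially the same route as the paper: pair the Weitzenb\"ock formula $\Delta_d=\nabla^*\nabla+\mathcal{R}$ with a compactly supported $\phi$, use boundedness of $\mathcal{R}$ from the sectional curvature bound to control $\lv\nabla\phi\rv^2$ by $\la\Delta_d\phi,\phi\ra+C\lv\phi\rv^2$, and invoke the K\"ahler identity $\Delta_d=2\Delta_{\delbar}$ to rewrite this in terms of $\lv\cdot\rv_2=\lv\cdot\rv_4$. The only cosmetic difference is that the paper phrases the intermediate estimate via $\lv d\phi\rv^2+\lv d^*\phi\rv^2$ rather than $\lv\delbar\phi\rv^2+\lv\delbar^*\phi\rv^2$.
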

\begin{proof}
The Weitzenb\"ock formula for $\phi\in A^{p,q}_c$ is
\begin{equation}\label{weitz}
\begin{split}
\Delta_{d}\phi=\nabla^*\nabla\phi+R(\phi),
\end{split}
\end{equation}
where $\nabla^*$ is the formal adjoint of $\nabla$, and $R$ denotes an operator of order zero whose coefficients involve the curvature tensor. In particular, since the sectional curvature is bounded, then $R$ is a bounded operator.
We compute the $L^2$ product of  both sides of equation (\ref{weitz}) with the form $\phi$:
\begin{equation*}
\begin{split}
\la\Delta_{d}\phi,\phi\ra=\la\nabla^*\nabla\phi,\phi\ra+\la R(\phi),\phi\ra.
\end{split}
\end{equation*}
Integrating by parts, we get
\begin{equation*}
\begin{split}
\lV d\phi\rV^2+\lV d^*\phi\rV^2=\lV\nabla\phi\rV^2+\la R(\phi),\phi\ra.
\end{split}
\end{equation*}
Since $R$ is a bounded operator, we derive there exists $C>0$ not depending on $\phi$ such that
\begin{equation*}
\begin{split}
|\la R(\phi),\phi\ra|\le C \lV\phi\rV^2,
\end{split}
\end{equation*}
and
\begin{equation*}
\begin{split}
\lV d\phi\rV^2+\lV d^*\phi\rV^2\ge\lV\nabla\phi\rV^2-C\lV\phi\rV^2,
\end{split}
\end{equation*}
which implies
\begin{equation*}
\begin{split}
\lV \phi\rV_1^2\le2(C+1)\lV\phi\rV^2_4.
\end{split}
\end{equation*}
This, together with (\ref{nabla-del}), ends the proof.
\end{proof}

Lemma \ref{equiv-kahler} implies that 
\begin{equation*}
W^{1,2}_1A^{p,q}=W^{1,2}_2A^{p,q}=W^{1,2}_3A^{p,q}=W^{1,2}_4A^{p,q},
\end{equation*}
for a K\"ahler manifold with bounded sectional curvature.

\section{$W^{1,2}$ weak Bott-Chern and Dolbeault decompositions}\label{decomp}
In this section we prove our main results, i.e., $W^{1,2}$ weak Bott-Chern and Dolbeault decompositions.
The following lemma is essential to derive these decompositions.
\begin{lemma}[$W^{1,2}$ integration by parts]\label{adj-kahler}
Let $(M,J,g,\omega)$ be a K\"ahler manifold. 
Let $\alpha,\beta\in A^{*,*}$. If at least one between $\alpha$ and $\beta$ has compact support, then
\begin{equation*}\label{adj-formula}
\begin{split}
\la\del\alpha,\beta\ra_2=\la\alpha,\del^*\beta\ra_2,\ \ \ \la\delbar\alpha,\beta\ra_2=\la\alpha,\delbar^*\beta\ra_2.
\end{split}
\end{equation*}
\end{lemma}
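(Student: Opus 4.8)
The plan is to reduce the $W^{1,2}$ statement to a chain of $L^2$ integration-by-parts identities, exploiting that on a K\"ahler manifold $\la\cdot,\cdot\ra_2$ can be rewritten via the Dolbeault Laplacian as in \eqref{eq-norm-lapl}. Concretely, for $\alpha,\beta\in A^{*,*}$ with (say) $\alpha$ of compact support, I would expand
\begin{equation*}
\la\del\alpha,\beta\ra_2=\la\del\alpha,\beta\ra+\la\delbar\del\alpha,\delbar\beta\ra+\la\delbar^*\del\alpha,\delbar^*\beta\ra,
\end{equation*}
and the analogous expansion of $\la\alpha,\del^*\beta\ra_2$, and then show the two expressions agree term by term. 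The first terms match by the ordinary $L^2$ adjunction $\la\del\alpha,\beta\ra=\la\alpha,\del^*\beta\ra$, which is valid since $\alpha$ (hence $\del\alpha$) has compact support. For the remaining two pairs of terms the strategy is to push all the operators off of $\alpha$ using $L^2$ adjunction (legitimate because everything involving $\alpha$ is compactly supported), collect the operators acting on $\beta$, and then use the K\"ahler identities — specifically that $\del$ commutes with $\delbar^*$ up to sign and that $\Delta_\del=\Delta_\delbar$ — to rearrange the operator string on $\beta$ into what appears in the expansion of $\la\alpha,\del^*\beta\ra_2$.

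In more detail: $\la\delbar\del\alpha,\delbar\beta\ra+\la\delbar^*\del\alpha,\delbar^*\beta\ra=\la\del\alpha,\Delta_\delbar\beta\ra$ by integrating by parts (again licit by compact support of $\del\alpha$), which equals $\la\alpha,\del^*\Delta_\delbar\beta\ra$. On the other side, $\la\delbar\alpha,\delbar\del^*\beta\ra+\la\delbar^*\alpha,\delbar^*\del^*\beta\ra=\la\alpha,\Delta_\delbar\del^*\beta\ra$. So the identity $\la\del\alpha,\beta\ra_2=\la\alpha,\del^*\beta\ra_2$ reduces to showing $\del^*\Delta_\delbar=\Delta_\delbar\del^*$ as operators on smooth forms, i.e. that $\del^*$ commutes with $\Delta_\delbar$. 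This is a standard consequence of the K\"ahler identities: $\Delta_\delbar=\Delta_\del$, and $\del^*$ obviously commutes with $\Delta_\del=\del\del^*+\del^*\del$. The case where $\beta$ (rather than $\alpha$) has compact support is handled symmetrically, moving operators onto $\alpha$ instead; and the $\delbar$ versions of both identities follow by the same argument with the roles of $\del,\delbar$ interchanged (using that $\delbar^*$ commutes with $\Delta_\del$, again by the K\"ahler identity $\Delta_\del=\Delta_\delbar$).

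The main obstacle — really the only place care is needed — is bookkeeping the compact-support hypotheses: every application of $L^2$ adjunction must have one of the two slots compactly supported, and one must check that differentiating a compactly supported form preserves compact support (it does) so that the chain of integrations by parts is justified at each stage. Beyond that, the proof is a formal manipulation: the K\"ahler hypothesis enters exactly twice, through $\Delta_\del=\Delta_\delbar$, which is what lets the fourth-order cross terms in $\la\cdot,\cdot\ra_2$ be shuffled past $\del^*$ (resp.\ $\delbar^*$). I would also remark that this is the precise point where the argument breaks down on a general Hermitian manifold, since there $\Delta_\del\neq\Delta_\delbar$ in general and no such commutation holds.
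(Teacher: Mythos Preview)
Your proposal is correct and follows essentially the same approach as the paper: expand $\la\cdot,\cdot\ra_2$, apply ordinary $L^2$ integration by parts (justified by compact support), and invoke the K\"ahler identities to commute operators. The only cosmetic difference is that the paper uses the anticommutation $\del\delbar^*=-\delbar^*\del$, $\del^*\delbar=-\delbar\del^*$ term by term, whereas you package the two cross terms into $\Delta_{\delbar}$ and then use $[\del^*,\Delta_{\delbar}]=0$ via $\Delta_{\del}=\Delta_{\delbar}$; these are equivalent formulations of the same identities.
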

\begin{proof}
By K\"ahler identities, we know that $\del$ and $\delbar^*$ anticommute, as well as $\del^*$ and $\delbar$. Using these facts and integrating by parts, we conclude as follows:
\begin{equation*}
\begin{split}
\la\del\alpha,\beta\ra_2&=\la\del\alpha,\beta\ra+\la\delbar\del\alpha,\delbar\beta\ra+\la\delbar^*\del\alpha,\delbar^*\beta\ra\\
&=\la\del\alpha,\beta\ra-\la\del\delbar\alpha,\delbar\beta\ra-\la\del\delbar^*\alpha,\delbar^*\beta\ra\\
&=\la\alpha,\del^*\beta\ra-\la\delbar\alpha,\del^*\delbar\beta\ra-\la\delbar^*\alpha,\del^*\delbar^*\beta\ra\\
&=\la\alpha,\del^*\beta\ra+\la\delbar\alpha,\delbar\del^*\beta\ra+\la\delbar^*\alpha,\delbar^*\del^*\beta\ra\\
&=\la\alpha,\del^*\beta\ra_2.
\end{split}
\end{equation*}
To prove the second equality, recall that $\la\cdot,\cdot\ra_2=\la\cdot,\cdot\ra_3$ and proceed as before:
\begin{equation*}
\begin{split}
\la\delbar\alpha,\beta\ra_3&=\la\delbar\alpha,\beta\ra+\la\del\delbar\alpha,\del\beta\ra+\la\del^*\delbar\alpha,\del^*\beta\ra\\
&=\la\delbar\alpha,\beta\ra-\la\delbar\del\alpha,\del\beta\ra-\la\delbar\del^*\alpha,\del^*\beta\ra\\
&=\la\alpha,\delbar^*\beta\ra-\la\del\alpha,\delbar^*\del\beta\ra-\la\del^*\alpha,\delbar^*\del^*\beta\ra\\
&=\la\alpha,\delbar^*\beta\ra+\la\del\alpha,\del\delbar^*\beta\ra+\la\del^*\alpha,\del^*\delbar^*\beta\ra\\
&=\la\alpha,\delbar^*\beta\ra_3.\qedhere
\end{split}
\end{equation*}
\end{proof}
Thanks to Lemma \ref{adj-kahler}, the formal adjoint operators of $\del$, $\delbar$, $d$, with respect to the $W^{1,2}_i A^{p,q}$-norms, for $i=2,3,4$, are the same usual formal adjoint operators $\del^*$, $\delbar^*$, $d^*$  computed with respect to the $L^{2} A^{p,q}$-norm, on K\"ahler manifolds.

Let $(M,J,g,\omega)$ be a Hermitian manifold of complex dimension $n$. 
Given $P:A^{p,q}\to A^{r,s}$ a differential operator, then it defines an unbounded linear operator $\tilde{P_i}:W^{1,2}_i A^{p,q}\to W^{1,2}_i A^{r,s}$, for $i=1,2,3,4$, which is densely defined and closable, as in the $L^2$ case. We define
\begin{equation*}
\D(P_{w,i}):=\{u\in W^{1,2}_i A^{p,q}\,|\,\exists v\in W^{1,2}_i A^{r,s},\ \forall w\in  A^{r,s}_c\ \la v,w\ra_i=\la u,P^*w\ra_i\},
\end{equation*}
and set $P_{w,i}(u):=v$. The operator $P_{w,i}:W^{1,2}_i A^{p,q}\to W^{1,2}_i A^{r,s}$ is a closed and densely defined operator, which extends $P$.

In the following, if $P:W^{1,2}_i A^{*,*}\to W^{1,2}_i A^{*,*}$ is an operator, by $\Ker P$ we denote the space $\Ker P\cap W^{1,2}_i A^{p,q}$, when the bi-gradation $(p,q)$ is clear.

\subsection{Bott-Chern decomposition of the space $W^{1,2}_2 A^{p,q}$}

Define the space
\begin{equation*}
W^{1,2}_i\tilde{\mathcal{H}}^{p,q}_{BC}:=\ker d_{w,i}\cap\ker (\delbar^*\del^*)_{w,i},
\end{equation*}
i.e., 
\begin{equation*}
W^{1,2}_i\tilde{\mathcal{H}}^{p,q}_{BC}=\{\phi\in W^{1,2}_i A^{p,q}\,|\,\forall\gamma\in A^{*,*}_c\ \la \phi,d^*\gamma\ra_i=\la\phi,\del\delbar\gamma\ra_i=0\}.
\end{equation*}
We can now prove the analogue of Theorem \ref{teo-kod} by Kodaira in the $W^{1,2}$ Bott-Chern case. 

\begin{theorem}[$W^{1,2}$ weak Bott-Chern decomposition]\label{teo-decomp}
Let $(M,J,g,\omega)$ be a K\"ahler manifold. Then we get the following orthogonal decomposition of the Hilbert space $(W^{1,2}_2 A^{p,q},\la\cdot,\cdot\ra_2)$:
\begin{align}
W^{1,2}_2 A^{p,q}&=W^{1,2}_2\tilde{\mathcal{H}}^{p,q}_{BC}\overset{\perp}{\oplus}\overline{\del\delbar A^{p-1,q-1}_c}\overset{\perp}{\oplus}\overline{\del^* A^{p+1,q}_c+\delbar^* A^{p,q+1}_c},\label{bc-decomp}\\
\Ker d_{w,2}&=W^{1,2}_2\tilde{\mathcal{H}}^{p,q}_{BC}\overset{\perp}{\oplus}\overline{\del\delbar A^{p-1,q-1}_c}.\label{ker-bc-decomp}
\end{align}
\end{theorem}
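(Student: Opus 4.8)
The plan is to mimic the proof of Kodaira's Theorem~\ref{teo-kod}, transported to the Hilbert space $(W^{1,2}_2 A^{p,q},\la\cdot,\cdot\ra_2)$, using Lemma~\ref{adj-kahler} as the substitute for ordinary integration by parts. First I would observe that $\del\delbar A^{p-1,q-1}_c$ and $\del^* A^{p+1,q}_c+\delbar^* A^{p,q+1}_c$ are orthogonal to each other with respect to $\la\cdot,\cdot\ra_2$: for $\alpha\in A^{p-1,q-1}_c$ and $\beta\in A^{p+1,q}_c$ (similarly for the $\delbar^*$-piece) one computes, via Lemma~\ref{adj-kahler} and the fact that $\del^2=0=\delbar^2$ together with the K\"ahler anticommutation of $\del$ with $\delbar^*$,
\begin{equation*}
\la\del\delbar\alpha,\del^*\beta\ra_2=\la\delbar\alpha,\del^*\del^*\beta\ra_2=0,
\end{equation*}
and likewise $\la\del\delbar\alpha,\delbar^*\beta\ra_2$ vanishes after moving $\del$ across to get $\del^*\delbar^*$ and using that these operators applied to an $A^{*,*}_c$-form interact trivially with $\delbar\alpha$; the clean way is to note $\la\del\delbar\alpha,\delbar^*\gamma\ra_2=\la\delbar^*\del\delbar\alpha,\gamma\ra_2=-\la\del\delbar^*\delbar\alpha,\gamma\ra_2$ and this pairs with $\del\delbar\alpha$ which we already know; alternatively, since on a K\"ahler manifold $\la\cdot,\cdot\ra_2=\la\cdot,\cdot\ra_4$ is symmetric in $\del$ and $\delbar$, it suffices to treat one case and invoke the symmetry remark. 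Thus the two "exact" summands together span a subspace, call it $V$, and $W^{1,2}_2\tilde{\mathcal{H}}^{p,q}_{BC}$ is by its very definition $V^{\perp}$ inside $W^{1,2}_2 A^{p,q}$: indeed $\la\phi,\del^*\gamma\ra_2+\la\phi,\delbar^*\gamma\ra_2=\la\phi,d^*\gamma\ra_2$ (up to splitting $\gamma$ into bidegrees the condition $\la\phi,d^*\gamma\ra_2=0$ for all $\gamma\in A^{*,*}_c$ is exactly orthogonality to $\del^* A^{p+1,q}_c+\delbar^* A^{p,q+1}_c$) and $\la\phi,\del\delbar\gamma\ra_2=0$ is orthogonality to $\del\delbar A^{p-1,q-1}_c$.

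Granting this, the decomposition \eqref{bc-decomp} is the orthogonal decomposition $W^{1,2}_2 A^{p,q}=\overline{V}\oplus V^{\perp}$ of a Hilbert space, once we know $\overline{V}=\overline{\del\delbar A^{p-1,q-1}_c}\oplus\overline{\del^* A^{p+1,q}_c+\delbar^* A^{p,q+1}_c}$; the latter is immediate from the orthogonality established above, since the closure of an orthogonal algebraic sum is the orthogonal direct sum of the closures. So \eqref{bc-decomp} requires essentially no work beyond the orthogonality computation—this is the standard "weak Hodge decomposition is just $\overline{\text{range}}\oplus\ker$" mechanism, and the content of Kodaira's theorem is precisely that the harmonic space is characterized as that orthogonal complement, which here is the \emph{definition}.

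For \eqref{ker-bc-decomp} I would argue as follows. Clearly $W^{1,2}_2\tilde{\mathcal{H}}^{p,q}_{BC}\subset\Ker d_{w,2}$, since $\la\phi,d^*\gamma\ra_2=0$ for all $\gamma\in A^{*,*}_c$ says exactly $\phi\in\Ker d_{w,2}$ (the $W^{1,2}_2$-formal adjoint of $d$ is $d^*$ by the remark after Lemma~\ref{adj-kahler}). Also $\del\delbar A^{p-1,q-1}_c\subset\Ker d_{w,2}$ trivially since $d\del\delbar=0$ and $\Ker d_{w,2}$ is closed (kernels of closed operators are closed), so $\overline{\del\delbar A^{p-1,q-1}_c}\subset\Ker d_{w,2}$, while $\del^* A^{p+1,q}_c+\delbar^* A^{p,q+1}_c$ is \emph{not} contained in it. So the right-hand side of \eqref{ker-bc-decomp} sits inside $\Ker d_{w,2}$. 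For the reverse inclusion, take $\phi\in\Ker d_{w,2}$ and write it via \eqref{bc-decomp} as $\phi=h+\del\delbar\eta_j\text{-limit}+\psi$ with $h\in W^{1,2}_2\tilde{\mathcal{H}}^{p,q}_{BC}$ and $\psi\in\overline{\del^* A^{p+1,q}_c+\delbar^* A^{p,q+1}_c}$; since the first two pieces lie in $\Ker d_{w,2}$, so does $\psi$. It then suffices to show $\psi=0$, i.e. that $\overline{\del^* A^{p+1,q}_c+\delbar^* A^{p,q+1}_c}\cap\Ker d_{w,2}=\{0\}$. This is where the main work lies: for $\psi$ in this intersection and for any $\del^*\beta+\delbar^*\beta'$ with $\beta\in A^{p+1,q}_c$, $\beta'\in A^{p,q+1}_c$, I want $\la\psi,\del^*\beta+\delbar^*\beta'\ra_2=0$, which by Lemma~\ref{adj-kahler} equals $\la\psi,\del^*\beta\ra_2+\la\psi,\delbar^*\beta'\ra_2=\la\del\psi,\beta\ra_2+\la\delbar\psi,\beta'\ra_2$ in the weak sense; but $\del\psi=0$ and $\delbar\psi=0$ because $d\psi=0$ and $\del\psi,\delbar\psi$ are the bidegree components of $d\psi$ (here I would be careful to justify that $d_{w,2}\psi=0$ forces $\del_{w,2}\psi=0$ and $\delbar_{w,2}\psi=0$ separately, using that $d^*=\del^*+\delbar^*$ and testing against $\gamma$ of pure bidegree). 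Hence $\psi\perp(\del^* A^{p+1,q}_c+\delbar^* A^{p,q+1}_c)$ in $\la\cdot,\cdot\ra_2$, so $\psi$ is orthogonal to the closure of that space, but $\psi$ also lies in that closure, forcing $\psi=0$. The main obstacle, and the step deserving genuine care, is this last density/orthogonality argument: one must handle the weak extensions correctly (the $W^{1,2}_2$-weak $d$ and its adjoint), verify that Lemma~\ref{adj-kahler} extends from $A^{*,*}_c$-pairings to the required pairing of a $W^{1,2}_2$-form against a compactly supported form (which it does, since one of the two arguments has compact support), and confirm the bidegree-splitting of the kernel condition—everything else is the routine Hilbert-space orthogonal-complement formalism.
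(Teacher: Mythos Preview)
Your proof is correct and follows essentially the same route as the paper: establish orthogonality of the two exact pieces via Lemma~\ref{adj-kahler}, identify $W^{1,2}_2\tilde{\mathcal{H}}^{p,q}_{BC}$ as the orthogonal complement of their sum by definition, and deduce \eqref{bc-decomp} from the standard Hilbert-space splitting $\overline{V}\oplus V^{\perp}$. For \eqref{ker-bc-decomp} the paper simply says ``intersect \eqref{bc-decomp} with $\Ker d_{w,2}$''; your more careful argument---checking that the harmonic and $\del\delbar$-pieces lie in $\Ker d_{w,2}$ and that the $\del^*+\delbar^*$-piece meets it only in zero---is exactly what that one-line remark unpacks to.
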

\begin{proof} 
First of all, note that the spaces $\overline{\del\delbar A^{p-1,q-1}_c}$ and $\overline{\del^* A^{p+1,q}_c+\delbar^* A^{p,q+1}_c}$ are orthogonal. Indeed, by Lemma \ref{adj-kahler} it is immediate to show that ${\del\delbar A^{p-1,q-1}_c}$ and ${\del^* A^{p+1,q}_c+\delbar^* A^{p,q+1}_c}$ are orthogonal.
Set
\begin{equation*}
X=\overline{\del\delbar A^{p-1,q-1}_c}\overset{\perp}{\oplus}\overline{\del^* A^{p+1,q}_c+\delbar^* A^{p,q+1}_c},
\end{equation*}
which is a closed subspace. Therefore we get the orthogonal decomposition of the Hilbert space $W^{1,2}_2 A^{p,q}=X\overset{\perp}{\oplus}X^{\perp}$.
Note that
\begin{equation*}
X^{\perp}=\Big({\del\delbar A^{p-1,q-1}_c}\Big)^{\perp}\cap\Big({\del^* A^{p+1,q}_c+\delbar^* A^{p,q+1}_c}\Big)^{\perp}=W^{1,2}_2\tilde{\mathcal{H}^{p,q}_{BC}}
\end{equation*}
by definition. This proves equation (\ref{bc-decomp}). Equation (\ref{ker-bc-decomp}) follows by intersecting equation (\ref{bc-decomp}) with $\Ker d_{w,2}$.
\end{proof}

It remains to understand the regularity of the spaces involved in the decomposition of Theorem \ref{teo-decomp}.
\begin{theorem}[Bott-Chern Regularity]\label{prop-reg}
Let $(M,J,g,\omega)$ be a Hermitian manifold. Then, for $i=2,3,4$, we get the characterization
\begin{gather*}
W^{1,2}_i\tilde{\mathcal{H}}^{p,q}_{BC}=\{\phi\in A^{p,q}\,|\,d\phi=0,\ \delbar^*\del^*\phi=0,\ \lv\phi\rv_i<\infty\}.
\end{gather*}
Moreover, if $g$ K\"ahler, we also get the decomposition 
\begin{gather*}
W^{1,2}_2 A^{p,q}\cap A^{p,q}=W^{1,2}_2\tilde{\mathcal{H}}^{p,q}_{BC}\overset{\perp}{\oplus}\left(\overline{\del\delbar A^{p-1,q-1}_c}\cap A^{p,q}\right)\overset{\perp}{\oplus}\left(\overline{\del^* A^{p+1,q}_c+\delbar^* A^{p,q+1}_c}\cap A^{p,q}\right).
\end{gather*}
\end{theorem}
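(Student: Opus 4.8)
\subsection*{Proof proposal}

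The plan is to prove the two inclusions of the displayed characterization and then to deduce the decomposition of $W^{1,2}_2 A^{p,q}\cap A^{p,q}$ by intersecting the decomposition of Theorem \ref{teo-decomp} with $A^{p,q}$ and checking that the three summands consist of smooth forms. Throughout write $T_2=1+\Delta_{\delbar}$, $T_3=1+\Delta_{\del}$, $T_4=1+\tfrac12(\Delta_{\del}+\Delta_{\delbar})$, so that, by \eqref{eq-norm-lapl}--\eqref{eq-norm-lapl-4} together with a density argument, $\la\phi,\beta\ra_i=\la\phi,T_i\beta\ra$ for all $\phi\in W^{1,2}_i A^{p,q}$ and $\beta\in A^{p,q}_c$; note that each $T_i$ is a positive, formally self adjoint, elliptic differential operator of order $2$.

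For the inclusion ``$\subseteq$'' the heart of the matter is elliptic regularity. Let $\phi\in W^{1,2}_i\tilde{\mathcal H}^{p,q}_{BC}$, and fix $\gamma\in A^{p,q}_c$. Expanding $\tilde\Delta_{BC}\gamma$ via its six--term definition, each summand is of one of the forms $\del\delbar\mu$ with $\mu\in A^{p-1,q-1}_c$ (namely $\del\delbar\delbar^*\del^*\gamma$), $\del^*\nu$ with $\nu\in A^{p+1,q}_c$ (namely $\del^*\delbar\delbar^*\del\gamma$ and $\del^*\del\gamma$), or $\delbar^*\nu$ with $\nu\in A^{p,q+1}_c$ (the remaining three). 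Since $\la\phi,d^*\mu'\ra_i=\la\phi,\del\delbar\mu'\ra_i=0$ for every $\mu'\in A^{*,*}_c$, and since for bidegree reasons $\la\phi,\del^*\nu\ra_i=\la\phi,d^*\nu\ra_i$ when $\nu\in A^{p+1,q}_c$ and $\la\phi,\delbar^*\nu\ra_i=\la\phi,d^*\nu\ra_i$ when $\nu\in A^{p,q+1}_c$, all six contributions vanish, whence $\la\phi,\tilde\Delta_{BC}\gamma\ra_i=0$ for all $\gamma\in A^{p,q}_c$. Rewriting this as $\la\phi,T_i\tilde\Delta_{BC}\gamma\ra=0$ and using $(T_i\tilde\Delta_{BC})^{*}=\tilde\Delta_{BC}T_i$ (formal self adjointness of $T_i$ and $\tilde\Delta_{BC}$), we see that $\phi\in L^1_{loc}$ is a weak solution of $(\tilde\Delta_{BC}T_i)\phi=0$. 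As $\tilde\Delta_{BC}T_i$ is elliptic of order $6$, Theorem \ref{ell-reg} gives $\phi\in A^{p,q}$. Once $\phi$ is smooth it lies in $A^{*,*}$, so Lemma \ref{adj-kahler} applies and the relations $\la\phi,d^*\gamma\ra_i=\la\phi,\del\delbar\gamma\ra_i=0$ turn into $\la\del\phi,\gamma\ra_i=\la\delbar\phi,\gamma\ra_i=\la\delbar^*\del^*\phi,\gamma\ra_i=0$ for all test forms $\gamma$ of the appropriate bidegree; unwinding each of these with \eqref{eq-norm-lapl} and the K\"ahler identities forces $\del\phi=0$, $\delbar\phi=0$ and $\delbar^*\del^*\phi=0$. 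Conversely, if $\phi\in A^{p,q}$ satisfies $d\phi=0$, $\delbar^*\del^*\phi=0$ and $\lv\phi\rv_i<\infty$, then $\phi\in W^{1,2}_i A^{p,q}$, and running the same computation backwards---using $\delbar\phi=\del\phi=0$, the identities $\del^{*2}=\delbar^{*2}=0$, and the anticommutation of $\del$ with $\delbar^*$ and of $\del^*$ with $\delbar$---gives $\la\phi,d^*\gamma\ra_i=\la\phi,\del\delbar\gamma\ra_i=0$, so $\phi\in W^{1,2}_i\tilde{\mathcal H}^{p,q}_{BC}$.

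For the last assertion, assume $g$ K\"ahler and let $\phi\in W^{1,2}_2 A^{p,q}\cap A^{p,q}$. By Theorem \ref{teo-decomp}, $\phi=\phi_H+u+v$ with $\phi_H\in W^{1,2}_2\tilde{\mathcal H}^{p,q}_{BC}$, $u\in\overline{\del\delbar A^{p-1,q-1}_c}$ and $v\in\overline{\del^*A^{p+1,q}_c+\delbar^*A^{p,q+1}_c}$; by the part already proved, $\phi_H\in A^{p,q}$. Approximating $u$ by forms $\del\delbar\eta_j$ and $v$ by forms $\del^*\alpha_j+\delbar^*\beta_j$, and using $\del\delbar=-\delbar\del$, $\del^{*2}=\delbar^{*2}=0$ and $\del^*\delbar^*=-\delbar^*\del^*$, one checks $d_{w,2}u=0$ and $(\delbar^*\del^*)_{w,2}v=0$; since $W^{1,2}_2=W^{1,2}_4$ on the K\"ahler manifold, $\phi$ lies in the domain of $d_{w,2}$ and $d_{w,2}v=d_{w,2}\phi-d_{w,2}\phi_H-d_{w,2}u=d\phi$, a smooth form. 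Substituting $(\delbar^*\del^*)_{w,2}v=0$ and $d_{w,2}v=d\phi$ into
\[
\tilde\Delta_{BC}v=\del\delbar\delbar^*\del^*v+\delbar^*\del^*\del\delbar v+\del^*\delbar\delbar^*\del v+\delbar^*\del\del^*\delbar v+\del^*\del v+\delbar^*\delbar v
\]
shows that $\tilde\Delta_{BC}v$ is a smooth form, so $v\in A^{p,q}$ by ellipticity of $\tilde\Delta_{BC}$ and Theorem \ref{ell-reg}; hence also $u=\phi-\phi_H-v\in A^{p,q}$. Intersecting the orthogonal decomposition of Theorem \ref{teo-decomp} with $A^{p,q}$ now yields the claimed decomposition of $W^{1,2}_2 A^{p,q}\cap A^{p,q}$, the reverse inclusion being obvious.

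The step I expect to be the main obstacle is the regularity bootstrap in the first part: recognizing that the weak orthogonality relations defining $W^{1,2}_i\tilde{\mathcal H}^{p,q}_{BC}$ encode precisely that $\la\phi,\tilde\Delta_{BC}\gamma\ra_i=0$, and hence that $\phi$ solves the order--$6$ elliptic equation $(\tilde\Delta_{BC}T_i)\phi=0$. A secondary delicate point is the passage, after smoothness is established, from the $T_i$--twisted identities (such as $T_i\delbar\phi=0$) to the plain vanishing $\delbar\phi=0$, $\del\phi=0$, $\delbar^*\del^*\phi=0$; this is carried out using the K\"ahler identities, Lemma \ref{adj-kahler}, and the equality $W^{1,2}_2=W^{1,2}_3=W^{1,2}_4$, which guarantees that all first--order derivatives of $\phi$ are simultaneously controlled.
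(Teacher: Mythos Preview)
Your overall plan and the core regularity step---showing $\la\phi,\tilde\Delta_{BC}\gamma\ra_i=0$ and converting this via $T_i$ into the weak equation $(\tilde\Delta_{BC}T_i)\phi=0$ for an order-$6$ elliptic operator---coincide with the paper's argument. Two steps, however, do not go through as written.

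First, to pass from the orthogonality relations to the pointwise equations $d\phi=0$, $\delbar^*\del^*\phi=0$, you invoke Lemma \ref{adj-kahler} and the anticommutation of $\del$ with $\delbar^*$. Both are K\"ahler facts, whereas the first assertion of the theorem is stated for an arbitrary \emph{Hermitian} manifold. The paper does not call on any K\"ahler identities at this point: once smoothness is known it simply records that the defining conditions give $d\alpha_1=0$ and $\delbar^*\del^*\alpha_1=0$. The same objection applies to your converse inclusion, where you again use the K\"ahler anticommutation relations; you should find an argument that stays within the Hermitian setting.

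Second, your proof that $v$ is smooth has a genuine gap. You assert $\phi\in\D(d_{w,2})$ because $W^{1,2}_2=W^{1,2}_4$, but membership in $\D(d_{w,2})$ requires $d\phi\in W^{1,2}_2$, i.e., $\delbar d\phi,\ \delbar^*d\phi\in L^2$---second-order information that the $W^{1,2}$ norms do not control. Moreover, ``substituting'' $(\delbar^*\del^*)_{w,2}v=0$ and $d_{w,2}v=d\phi$ into the six-term formula for $\tilde\Delta_{BC}v$ is not a legitimate manipulation while $v$ is only a $W^{1,2}_2$ element: those identities live in the $\la\cdot,\cdot\ra_2$ pairing, whereas the weak equation $\tilde\Delta_{BC}v=(\text{smooth})$ needed for Theorem \ref{ell-reg} has to be established in the $L^2$ pairing. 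The paper avoids this entirely: using only the $L^2$ weak identities $d_w\alpha_2=0$ and $(\delbar^*\del^*)_w\alpha_3=0$ (which follow at once from the approximating sequences), it computes $\la\alpha_2,\tilde\Delta_{BC}\gamma\ra$ and $\la\alpha_3,\tilde\Delta_{BC}\gamma\ra$ term by term, observes that every surviving term may be rewritten with the smooth $\alpha$ in place of $\alpha_2$ or $\alpha_3$, and reads off explicit smooth right-hand sides. That route is both shorter and rigorous.
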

\begin{proof} 
For $i=2$, let $\alpha_1\in W^{1,2}_2\tilde{\mathcal{H}}^{p,q}_{BC}$.
For every $\gamma\in A^{p,q}_c$, keeping in mind the very definitions of $W^{1,2}_2\tilde{\mathcal{H}}^{p,q}_{BC}$ and $\tilde{\Delta}_{BC}$, we get
\begin{equation*}\label{eq-ort}
\begin{split}
\la\alpha_1,\tilde{\Delta}_{BC}\gamma\ra_2=0.
\end{split}
\end{equation*}
If we see $\alpha_1$ as a $W^{1,2}_2$-limit of a sequence of compactly supported smooth forms $(\alpha_1)_\nu\in A^{p,q}_c$ and apply equation (\ref{eq-norm-lapl}), we get
\begin{equation*}
\la\alpha_1,(1+\Delta_{\delbar})\tilde{\Delta}_{BC}\gamma\ra=0.
\end{equation*}
That is, $\alpha_1$ is a weak solution of $\tilde{\Delta}_{BC}(1+\Delta_{\delbar})\alpha_1=0$.
Since $\tilde{\Delta}_{BC}(1+\Delta_{\delbar})$ is elliptic, then $\alpha_1$ is smooth by Theorem \ref{ell-reg}.
Furthermore, by the very definition of $W^{1,2}_2\tilde{\mathcal{H}}^{p,q}_{BC}$, we immediately derive
\begin{equation*}
d\alpha_1=0,\ \ \ \delbar^*\del^*\alpha_1=0,\ \ \ \tilde{\Delta}_{BC}\alpha_1=0.
\end{equation*}
For $i=3,4$, it suffices using respectively equations (\ref{eq-norm-lapl-3}),(\ref{eq-norm-lapl-4}) instead of equation (\ref{eq-norm-lapl}).

To finish the proof, let $\alpha=\alpha_1+\alpha_2+\alpha_3$, with $\alpha\in W^{1,2}_2 A^{p,q}\cap A^{p,q}$, $\alpha_1\in W^{1,2}_2\tilde{\mathcal{H}}^{p,q}_{BC}$, $\alpha_2\in \overline{\del\delbar A^{p-1,q-1}_c}$ and $\alpha_3\in \overline{\del^* A^{p+1,q}_c+\delbar^* A^{p,q+1}_c}$. We have to prove that $\alpha_2,\alpha_3$ are smooth.
Note that $d_w\alpha_2=0$ and $(\delbar^*\del^*)_w\alpha_3=0$, i.e., for all $\gamma\in A^{*,*}_c$
\begin{gather*}
\la\alpha_2,d^*\gamma\ra=0,\\
\la\alpha_3,\del\delbar\gamma\ra=0.
\end{gather*}
Also note that $\alpha_2$ is a weak solution of $\tilde{\Delta}_{BC}\alpha_2=\del\delbar\delbar^*\del^*\alpha$, indeed
\begin{gather*}
\la\alpha_2,\tilde{\Delta}_{BC}\gamma\ra=\la\alpha_2,\del\delbar\delbar^*\del^*\gamma\ra=\la\alpha,\del\delbar\delbar^*\del^*\gamma\ra=\la\del\delbar\delbar^*\del^*\alpha,\gamma\ra,
\end{gather*}
and  $\alpha_3$ is a weak solution of $\tilde{\Delta}_{BC}\alpha_3=\delbar^*\del^*\del\delbar\alpha+\del^*\delbar\delbar^*\del\alpha+\delbar^*\del\del^*\delbar\alpha
+\del^*\del\alpha+\delbar^*\delbar\alpha$, indeed
\begin{align*}
\la\alpha_3,\tilde{\Delta}_{BC}\gamma\ra&=\la\alpha_3,\delbar^*\del^*\del\delbar\gamma+\del^*\delbar\delbar^*\del\gamma+\delbar^*\del\del^*\delbar\gamma
+\del^*\del\gamma+\delbar^*\delbar\gamma\ra\\
&=\la\alpha,\delbar^*\del^*\del\delbar\gamma+\del^*\delbar\delbar^*\del\gamma+\delbar^*\del\del^*\delbar\gamma
+\del^*\del\gamma+\delbar^*\delbar\gamma\ra\\
&=\la\delbar^*\del^*\del\delbar\alpha+\del^*\delbar\delbar^*\del\alpha+\delbar^*\del\del^*\delbar\alpha
+\del^*\del\alpha+\delbar^*\delbar\alpha,\gamma\ra.
\end{align*}
By elliptic regularity, i.e., Theorem \ref{ell-reg}, it follows that since $\alpha$ is smooth, then $\alpha_2,\alpha_3$ are smooth.
\end{proof}

\begin{remark}\label{rem-l2}
In this work we are interested in a $W^{1,2}$ weak Bott-Chern decomposition. Nonetheless, we remark that with the same structure of proof of Theorem \ref{teo-decomp}, substituing the application of Lemma \ref{adj-kahler} with the classical $L^2$ integration by parts, it is possible to show the following $L^2$ weak Bott-Chern decomposition.

Let $(M,J,g,\omega)$ be a Hermitian manifold. Then we get the following orthogonal decomposition of the Hilbert space $(L^{2} A^{p,q},\la\cdot,\cdot\ra)$:
\begin{align*}
L^{2} A^{p,q}&=L^{2}\tilde{\mathcal{H}}^{p,q}_{BC}\overset{\perp}{\oplus}\overline{\del\delbar A^{p-1,q-1}_c}\overset{\perp}{\oplus}\overline{\del^* A^{p+1,q}_c+\delbar^* A^{p,q+1}_c},\\
\Ker d_{w}&=L^{2}\tilde{\mathcal{H}}^{p,q}_{BC}\overset{\perp}{\oplus}\overline{\del\delbar A^{p-1,q-1}_c},
\end{align*}
where $L^{2}\tilde{\mathcal{H}}^{p,q}_{BC}:=\ker d_{w}\cap\ker (\delbar^*\del^*)_{w}$, and here the closure and the orthogonal symbols are intended with respect to the $L^2$ inner product. The regularity result holds the same way in the $L^2$ case, and in particular $L^{2}\tilde{\mathcal{H}}^{p,q}_{BC}\subset A^{p,q}$. 

However, in the $L^2$ case, it is not clear how the space $L^{2}\tilde{\mathcal{H}}^{p,q}_{BC}$ is related to the space of $L^2$ Bott-Chern harmonic forms $L^2\H^{p,q}_{BC}$, namely the space of smooth $L^2$ forms $\phi$ satisfying $\tilde{\Delta}_{BC}\phi=0$. It would be interesting to find geometric assumptions on a Hermitian manifold yielding a link between the spaces $L^{2}\tilde{\mathcal{H}}^{p,q}_{BC}$ and $L^2\H^{p,q}_{BC}$ .
\end{remark}

\subsection{Dolbeault decomposition of the space $W^{1,2}_2 A^{p,q}$}

Set
\begin{equation*}
W^{1,2}_i\tilde{\mathcal{H}}^{p,q}_{\delbar}:=\ker \delbar_{w,i}\cap\ker (\delbar^*)_{w,i},
\end{equation*}
i.e., 
\begin{equation*}
W^{1,2}_i\tilde{\mathcal{H}}^{p,q}_{\delbar}=\{\phi\in W^{1,2}_i A^{p,q}\,|\,\forall\gamma\in A^{*,*}_c\ \la \phi,\delbar^*\gamma\ra_i=\la\phi,\delbar\gamma\ra_i=0\}.
\end{equation*}
Arguing in the same way as before, we obtain the following $W^{1,2}$ weak Dolbeault decomposition.

\begin{theorem}[$W^{1,2}$ weak Dolbeault decomposition]\label{teo-decomp-dol}
Let $(M,J,g,\omega)$ be a K\"ahler manifold. Then we get the following orthogonal decomposition of the Hilbert space $(W^{1,2}_2 A^{p,q},\la\cdot,\cdot\ra_2)$:
\begin{align*}
W^{1,2}_2 A^{p,q}&=W^{1,2}_2\tilde{\mathcal{H}}^{p,q}_{\delbar}\overset{\perp}{\oplus}\overline{\delbar A^{p,q-1}_c}\overset{\perp}{\oplus}\overline{\delbar^* A^{p,q+1}_c},\\
\Ker \delbar_{w,2}&=W^{1,2}_2\tilde{\mathcal{H}}^{p,q}_{BC}\overset{\perp}{\oplus}\overline{\delbar A^{p,q-1}_c}.
\end{align*}
\end{theorem}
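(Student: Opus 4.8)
The plan is to repeat, almost verbatim, the proof of Theorem \ref{teo-decomp}, replacing the pair of operators $(\del\delbar,\ \del^*\!+\!\delbar^*)$ by $(\delbar,\ \delbar^*)$ and invoking the second identity of Lemma \ref{adj-kahler}, namely $\la\delbar\alpha,\beta\ra_2=\la\alpha,\delbar^*\beta\ra_2$, in place of the first one.

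First I would check that the subspaces $\delbar A^{p,q-1}_c$ and $\delbar^* A^{p,q+1}_c$ of $W^{1,2}_2 A^{p,q}$ are orthogonal with respect to $\la\cdot,\cdot\ra_2$: for $\eta\in A^{p,q-1}_c$ and $\zeta\in A^{p,q+1}_c$, Lemma \ref{adj-kahler} gives $\la\delbar\eta,\delbar^*\zeta\ra_2=\la\eta,\delbar^*\delbar^*\zeta\ra_2=0$, since $(\delbar^*)^2=0$. Hence their closures are orthogonal as well, so that
\[
Y:=\overline{\delbar A^{p,q-1}_c}\overset{\perp}{\oplus}\overline{\delbar^* A^{p,q+1}_c}
\]
is a closed subspace of the Hilbert space $(W^{1,2}_2 A^{p,q},\la\cdot,\cdot\ra_2)$, and one gets $W^{1,2}_2 A^{p,q}=Y\overset{\perp}{\oplus}Y^\perp$. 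Next I would identify $Y^\perp$: a form $\phi\in W^{1,2}_2 A^{p,q}$ lies in $Y^\perp$ precisely when $\la\phi,\delbar\eta\ra_2=0$ for all $\eta\in A^{p,q-1}_c$ and $\la\phi,\delbar^*\zeta\ra_2=0$ for all $\zeta\in A^{p,q+1}_c$, and, splitting an arbitrary $\gamma\in A^{*,*}_c$ into its bidegree components, this is exactly the condition defining $W^{1,2}_2\tilde{\mathcal{H}}^{p,q}_{\delbar}$. Thus $Y^\perp=W^{1,2}_2\tilde{\mathcal{H}}^{p,q}_{\delbar}$, which yields the first decomposition.

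For the second decomposition I would intersect the first one with $\Ker\delbar_{w,2}$. Both $W^{1,2}_2\tilde{\mathcal{H}}^{p,q}_{\delbar}$ (immediately from its definition) and $\overline{\delbar A^{p,q-1}_c}$ (since $\la\delbar\eta,\delbar^*\gamma\ra_2=\la\eta,\delbar^*\delbar^*\gamma\ra_2=0$ for $\eta,\gamma$ compactly supported, and one passes to the $W^{1,2}_2$-limit) are contained in $\Ker\delbar_{w,2}$, whereas $\overline{\delbar^* A^{p,q+1}_c}\cap\Ker\delbar_{w,2}=\{0\}$: if $\phi=\lim_\nu\delbar^*\zeta_\nu$ with $\zeta_\nu\in A^{p,q+1}_c$ and $\delbar_{w,2}\phi=0$, then $\lv\phi\rv_2^2=\lim_\nu\la\phi,\delbar^*\zeta_\nu\ra_2=0$ directly from the definition of $\delbar_{w,2}$, so $\phi=0$. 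Writing an element of $\Ker\delbar_{w,2}$ along the orthogonal decomposition already obtained therefore forces its $\overline{\delbar^* A^{p,q+1}_c}$-component to vanish, and $\Ker\delbar_{w,2}=W^{1,2}_2\tilde{\mathcal{H}}^{p,q}_{\delbar}\overset{\perp}{\oplus}\overline{\delbar A^{p,q-1}_c}$ follows.

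I do not expect a genuine obstacle: the argument is a word-for-word transcription of the K\"ahler Bott-Chern case in Theorem \ref{teo-decomp}, the only point not already explicit there being the vanishing of $\overline{\delbar^* A^{p,q+1}_c}\cap\Ker\delbar_{w,2}$, which is immediate from the definition of the weak operator $\delbar_{w,2}$ together with the continuity of the inner product $\la\cdot,\cdot\ra_2$.
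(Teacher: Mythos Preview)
Your proof is correct and follows essentially the same route as the paper's: orthogonality of $\delbar A^{p,q-1}_c$ and $\delbar^* A^{p,q+1}_c$ via Lemma \ref{adj-kahler}, then $W^{1,2}_2 A^{p,q}=Y\oplus Y^{\perp}$ with $Y^{\perp}=W^{1,2}_2\tilde{\mathcal{H}}^{p,q}_{\delbar}$ by definition, and intersection with $\Ker\delbar_{w,2}$ for the second line. You actually supply more detail than the paper for the kernel identity (the paper omits the verification that $\overline{\delbar^* A^{p,q+1}_c}\cap\Ker\delbar_{w,2}=\{0\}$ in the Dolbeault case, treating the analogous step for $d_{w,2}$ in Theorem \ref{teo-decomp} in a single sentence); your argument for this point is sound. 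Note also that the subscript $BC$ in the second displayed line of the statement is evidently a typo for $\delbar$, and you have correctly proved the intended assertion.
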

\begin{proof} 
First of all, note that the spaces $\overline{\delbar A^{p,q-1}_c}$ and $\overline{\delbar^* A^{p,q+1}_c}$ are orthogonal, by Lemma \ref{adj-kahler}.
Set
\begin{equation*}
X=\overline{\delbar A^{p,q-1}_c}\overset{\perp}{\oplus}\overline{\delbar^* A^{p,q+1}_c},
\end{equation*}
which is a closed subspace. Therefore we get the orthogonal decomposition of the Hilbert space $W^{1,2}_2 A^{p,q}=X\overset{\perp}{\oplus}X^{\perp}$.
Note that
\begin{equation*}
X^{\perp}=\Big({\delbar A^{p,q-1}_c}\Big)^{\perp}\cap\Big({\delbar^* A^{p,q+1}_c}\Big)^{\perp}=W^{1,2}_2\tilde{\mathcal{H}^{p,q}_{\delbar}}
\end{equation*}
by definition.
\end{proof}

Concerning the regularity of the spaces involved in the decomposition of Theorem \ref{teo-decomp-dol}, we get
\begin{theorem}[Dolbeault Regularity]\label{prop-reg-dol}
Let $(M,J,g,\omega)$ be a Hermitian manifold. Then, for $i=2,3,4$, we get the characterization
\begin{gather*}
W^{1,2}_i\tilde{\mathcal{H}}^{p,q}_{\delbar}=\{\phi\in A^{p,q}\,|\,\delbar\phi=0,\ \delbar^*\phi=0,\ \lv\phi\rv_i<\infty\}.
\end{gather*}
Moreover, if $g$ is K\"ahler, we also get the decomposition 
\begin{gather*}
W^{1,2}_2 A^{p,q}\cap A^{p,q}=W^{1,2}_2\tilde{\mathcal{H}}^{p,q}_{\delbar}\overset{\perp}{\oplus}\left(\overline{\delbar A^{p,q-1}_c}\cap A^{p,q}\right)\overset{\perp}{\oplus}\left(\overline{\delbar^* A^{p,q+1}_c}\cap A^{p,q}\right).
\end{gather*}
\end{theorem}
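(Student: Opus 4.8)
The plan is to mirror, almost verbatim, the proof of the Bott-Chern regularity result (Theorem \ref{prop-reg}), replacing the fourth-order operator $\tilde\Delta_{BC}$ by the second-order Dolbeault Laplacian $\Delta_{\delbar}=\delbar\delbar^*+\delbar^*\delbar$. For the characterization of $W^{1,2}_i\tilde{\mathcal{H}}^{p,q}_{\delbar}$, take $\alpha_1\in W^{1,2}_i\tilde{\mathcal{H}}^{p,q}_{\delbar}$ and, for $i=2$, test against $\gamma\in A^{p,q}_c$: from the definition $\la\alpha_1,\delbar^*\gamma\ra_2=\la\alpha_1,\delbar\gamma\ra_2=0$ one gets $\la\alpha_1,\Delta_{\delbar}\gamma\ra_2=0$, and then applying the identity \eqref{eq-norm-lapl} (i.e.\ $\la\cdot,\cdot\ra_2=\la\cdot,\cdot\ra+\la\cdot,\Delta_{\delbar}\cdot\ra$ on $A^{p,q}_c$, valid here since $\Delta_{\delbar}\gamma$ is compactly supported) against a sequence $(\alpha_1)_\nu\in A^{p,q}_c$ converging to $\alpha_1$ in $W^{1,2}_2$-norm yields $\la\alpha_1,(1+\Delta_{\delbar})\Delta_{\delbar}\gamma\ra=0$ for all such $\gamma$. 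Hence $\alpha_1$ is an $L^2$ weak solution of the fourth-order equation $\Delta_{\delbar}(1+\Delta_{\delbar})\alpha_1=0$; since $\Delta_{\delbar}(1+\Delta_{\delbar})$ is elliptic, Theorem \ref{ell-reg} forces $\alpha_1\in A^{p,q}$. Once smoothness is known, the conditions $\la\alpha_1,\delbar^*\gamma\ra=\la\alpha_1,\delbar\gamma\ra=0$ for all $\gamma$ give $\delbar\alpha_1=0$ and $\delbar^*\alpha_1=0$ directly by integration by parts, which is exactly the asserted description. The cases $i=3,4$ are handled identically using \eqref{eq-norm-lapl-3} and \eqref{eq-norm-lapl-4} respectively; here one should note that on a general Hermitian manifold $\la\cdot,\cdot\ra_3$ need not be comparable with $\la\cdot,\cdot\ra_2$, but the argument only uses the integration-by-parts identity \eqref{eq-norm-lapl-3} on compactly supported forms, which holds on any Hermitian manifold.

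For the second assertion, assume $g$ is K\"ahler and write $\alpha=\alpha_1+\alpha_2+\alpha_3$ with $\alpha\in W^{1,2}_2 A^{p,q}\cap A^{p,q}$, $\alpha_1\in W^{1,2}_2\tilde{\mathcal{H}}^{p,q}_{\delbar}$, $\alpha_2\in\overline{\delbar A^{p,q-1}_c}$, $\alpha_3\in\overline{\delbar^* A^{p,q+1}_c}$, the decomposition coming from Theorem \ref{teo-decomp-dol}. We already know $\alpha_1$ is smooth, so it suffices to show $\alpha_2,\alpha_3\in A^{p,q}$. Exactly as in Theorem \ref{prop-reg}, one checks that $\alpha_2$ is an $L^2$ weak solution of $\Delta_{\delbar}\alpha_2=\delbar\delbar^*\alpha$ and $\alpha_3$ is an $L^2$ weak solution of $\Delta_{\delbar}\alpha_3=\delbar^*\delbar\alpha$: for instance, using $\delbar^*\alpha_2\in\overline{\delbar^*\delbar A^{p,q-1}_c}$ orthogonality — more directly, from $\la\alpha_2,\delbar^*\gamma\ra=0$ for all $\gamma$ (so $\delbar_w\alpha_2=0$) one gets $\la\alpha_2,\Delta_{\delbar}\gamma\ra=\la\alpha_2,\delbar\delbar^*\gamma\ra=\la\alpha,\delbar\delbar^*\gamma\ra=\la\delbar\delbar^*\alpha,\gamma\ra$ since $\alpha_1$ and $\alpha_3$ contribute $0$ to $\la\cdot,\delbar\delbar^*\gamma\ra$ (because $\delbar\alpha_1=0$ and $\alpha_3\perp\delbar(\text{anything})$, resp.\ $\delbar^*\gamma$ is $\delbar$-exact-adjacent), and similarly $\la\alpha_3,\Delta_{\delbar}\gamma\ra=\la\alpha_3,\delbar^*\delbar\gamma\ra=\la\delbar^*\delbar\alpha,\gamma\ra$. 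Since $\alpha$ is smooth, the right-hand sides are smooth, and $\Delta_{\delbar}$ is elliptic, so Theorem \ref{ell-reg} gives $\alpha_2,\alpha_3\in A^{p,q}$. Finally, intersecting the orthogonal decomposition of Theorem \ref{teo-decomp-dol} with $A^{p,q}$ and using $W^{1,2}_2\tilde{\mathcal{H}}^{p,q}_{\delbar}\subset A^{p,q}$ yields the stated decomposition of $W^{1,2}_2 A^{p,q}\cap A^{p,q}$.

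I expect the only genuinely delicate point to be the bookkeeping that shows $\alpha_1$ and $\alpha_3$ (resp.\ $\alpha_1$ and $\alpha_2$) drop out of the relevant pairings in the weak-solution computations for $\alpha_2$ and $\alpha_3$; this uses $\delbar\alpha_1=0$, $\delbar^*\alpha_1=0$ and the fact that $\delbar\delbar^*\gamma$ lies in $\delbar A^{*,*}_c$ while $\delbar^*\delbar\gamma$ lies in $\delbar^* A^{*,*}_c$, combined with the orthogonality statements already established in the proof of Theorem \ref{teo-decomp-dol} (which are themselves consequences of Lemma \ref{adj-kahler}). Everything else is a direct transcription of the Bott-Chern argument with a lower-order, simpler elliptic operator.
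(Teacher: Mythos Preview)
Your proposal is correct and follows the paper's own proof essentially verbatim: the same elliptic operator $\Delta_{\delbar}(1+\Delta_{\delbar})$ is used via the identity \eqref{eq-norm-lapl} (resp.\ \eqref{eq-norm-lapl-3}, \eqref{eq-norm-lapl-4}) to obtain smoothness of $\alpha_1$, and the weak-solution computations $\Delta_{\delbar}\alpha_2=\delbar\delbar^*\alpha$ and $\Delta_{\delbar}\alpha_3=\delbar^*\delbar\alpha$ together with Theorem~\ref{ell-reg} give smoothness of $\alpha_2,\alpha_3$. Your discussion of why $\alpha_1$ and $\alpha_3$ (resp.\ $\alpha_1$ and $\alpha_2$) drop out of the relevant pairings is, if anything, more explicit than the paper's own proof, which simply writes the chain of equalities without further comment.
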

\begin{proof} 
For $i=2$, let $\alpha_1\in W^{1,2}_2\tilde{\mathcal{H}}^{p,q}_{\delbar}$.
For every $\gamma\in A^{p,q}_c$, keeping in mind the very definitions of $W^{1,2}_2\tilde{\mathcal{H}}^{p,q}_{\delbar}$ and ${\Delta}_{\delbar}$, we get
\begin{equation*}\label{eq-ort-dol}
\begin{split}
\la\alpha_1,{\Delta}_{\delbar}\gamma\ra_2=0.
\end{split}
\end{equation*}
If we see $\alpha_1$ as a $W^{1,2}_2$-limit of a sequence of compactly supported smooth forms $(\alpha_1)_\nu\in A^{p,q}_c$ and apply equation (\ref{eq-norm-lapl}), we get
\begin{equation*}
\la\alpha_1,(1+\Delta_{\delbar})\Delta_{\delbar}\gamma\ra=0.
\end{equation*}
That is, $\alpha_1$ is a weak solution of $\Delta_{\delbar}(1+\Delta_{\delbar})\alpha_1=0$.
Since $\Delta_{\delbar}(1+\Delta_{\delbar})$ is elliptic, then $\alpha_1$ is smooth by Theorem \ref{ell-reg}.
Furthermore, by the very definition of $W^{1,2}_2\tilde{\mathcal{H}}^{p,q}_{BC}$, we immediately derive
\begin{equation*}
\delbar\alpha_1=0,\ \ \ \delbar^*\alpha_1=0,\ \ \ {\Delta}_{\delbar}\alpha_1=0.
\end{equation*}
For $i=3,4$ it suffices using respectively equations (\ref{eq-norm-lapl-3}),(\ref{eq-norm-lapl-4}) instead of equation (\ref{eq-norm-lapl}).

To finish the proof, let $\alpha=\alpha_1+\alpha_2+\alpha_3$, with $\alpha\in W^{1,2}_2 A^{p,q}\cap A^{p,q}$, $\alpha_1\in W^{1,2}_2\tilde{\mathcal{H}}^{p,q}_{BC}$, $\alpha_2\in \overline{\delbar A^{p,q-1}_c}$ and $\alpha_3\in \overline{\delbar^* A^{p,q+1}_c}$. We have to prove that $\alpha_2,\alpha_3$ are smooth.
Note that $\delbar_w\alpha_2=0$ and $(\delbar^*)_w\alpha_3=0$, i.e., for all $\gamma\in A^{*,*}_c$
\begin{gather*}
\la\alpha_2,\delbar^*\gamma\ra=0,\\
\la\alpha_3,\delbar\gamma\ra=0.
\end{gather*}
Also note that $\alpha_2$ is a weak solution of $\Delta_{\delbar}\alpha_2=\delbar\delbar^*\alpha$, indeed
\begin{gather*}
\la\alpha_2,\Delta_{\delbar}\gamma\ra=\la\alpha_2,\delbar\delbar^*\gamma\ra=\la\alpha,\delbar\delbar^*\gamma\ra=\la\delbar\delbar^*\alpha,\gamma\ra,
\end{gather*}
and  $\alpha_3$ is a weak solution of $\Delta_{\delbar}\alpha_3=\delbar^*\delbar\alpha$, indeed
\begin{align*}
\la\alpha_3,\Delta_{\delbar}\gamma\ra&=\la\alpha_3,\delbar^*\delbar\gamma\ra=\la\alpha,\delbar^*\delbar\gamma\ra=\la\delbar^*\delbar\alpha,\gamma\ra.
\end{align*}
By elliptic regularity, i.e., Theorem \ref{ell-reg}, it follows that since $\alpha$ is smooth, then $\alpha_2,\alpha_3$ are smooth.
\end{proof}

\section{Complete K\"ahler manifolds with bounded sectional curvature}\label{decomp-compl}

In this section, we gather the known relations between the spaces of $W^{1,2}$ Bott-Chern harmonic forms and the spaces of $W^{1,2}$, or $L^2$, Dolbeault harmonic forms, and the relations between these spaces of forms and the $W^{1,2}$ weak decompositions just proved.

Let $(M,J,g,\omega)$ be a Hermitian manifold. For $i=1,2,3,4$, the spaces of $W^{1,2}_i$ Bott-Chern and Dolbeault harmonic forms are defined respectively as
\begin{align*}
W^{1,2}_i\mathcal{H}^{p,q}_{BC}:=\left\{\varphi\in A^{p,q}\,\,\,\vert\,\,\,\tilde\Delta_{BC}\varphi=0,\ \lVert \varphi\rVert_{i}<\infty\right\},\\
W^{1,2}_i\mathcal{H}^{p,q}_{\delbar}:=\left\{\varphi\in A^{p,q}\,\,\,\vert\,\,\,\Delta_{\delbar}\varphi=0,\ \lVert \varphi\rVert_{i}<\infty\right\}.
\end{align*}
If the metric is complete, then by \cite[Proposition 7]{AV} and the characterization of Theorem \ref{prop-reg-dol}, we get, for $i=2,3,4$,
\begin{gather*}
W^{1,2}_i\tilde{\mathcal{H}}^{p,q}_{\delbar}=W^{1,2}_i{\mathcal{H}}^{p,q}_{\delbar}.
\end{gather*}

Let $(M,J,g,\omega)$ be a complete K\"ahler manifold with bounded sectional curvature. Theorem \ref{prop-reg} tells us
\begin{gather*}
W^{1,2}_2\tilde{\mathcal{H}}^{p,q}_{BC}\subset W^{1,2}_2\mathcal{H}^{p,q}_{BC}.
\end{gather*}
Then, Theorem \ref{cor-kahler}, together with Lemma \ref{equiv-kahler} shows
\begin{gather*}
W^{1,2}_2\tilde{\mathcal{H}}^{p,q}_{BC}\supset W^{1,2}_2\mathcal{H}^{p,q}_{BC},
\end{gather*}
thus yielding 
\begin{gather*}
W^{1,2}_2\tilde{\mathcal{H}}^{p,q}_{BC}= W^{1,2}_2\mathcal{H}^{p,q}_{BC}.
\end{gather*}

Furthermore, Theorem \ref{cor-kahler} and Lemma \ref{equiv-kahler} also imply, for $i=1,2,3,4$,
\begin{gather*}
W^{1,2}_i{\mathcal{H}}^{p,q}_{BC}= W^{1,2}_i\mathcal{H}^{p,q}_{\delbar},
\end{gather*}
generalizing, to the non compact case, the well known property that on compact K\"ahler manifolds the kernel of the Dolbeault Laplacian and the kernel of the Bott-Chern Laplacian coincide.
Investigating a little more in this direction, let us set 
\begin{gather*}
L^2\mathcal{H}^{p,q}_{\delbar}:=\left\{\varphi\in A^{p,q}\,\,\,\vert\,\,\,\Delta_{\delbar}\varphi=0,\ \lVert \varphi\rVert<\infty\right\}.
\end{gather*}
Theorem \ref{cor-kahler} implies $W^{1,2}_1{\mathcal{H}}^{p,q}_{BC}\subset L^2\mathcal{H}^{p,q}_{\delbar}$ on complete  K\"ahler manifolds with bounded sectional curvature. The following Corollary shows that this inclusion is, in fact, an equality.
Arguing like in \cite[Lemma 3.10]{HuTa}, one gets:
\begin{lemma}\label{lemma-weitz}
Let $(M,J,g,\omega)$ be a complete K\"ahler manifold. Assume that the sectional curvature is bounded. If $\phi\in L^2\mathcal{H}^{p,q}_{\delbar}$, then $\lVert \varphi\rVert_1<+\infty$, i.e., $L^2\mathcal{H}^{p,q}_{\delbar}=W^{1,2}_1\mathcal{H}^{p,q}_{\delbar}$.
\end{lemma}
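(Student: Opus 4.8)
The goal is to show that an $L^2$ Dolbeault-harmonic form $\varphi$ on a complete K\"ahler manifold with bounded sectional curvature automatically has $\lVert\nabla\varphi\rVert<+\infty$. The form is smooth and satisfies $\Delta_{\delbar}\varphi=0$, hence $\Delta_d\varphi=2\Delta_{\delbar}\varphi=0$ by the K\"ahler identities, and also $\delbar\varphi=\delbar^*\varphi=0$ (and thus $\del\varphi=\del^*\varphi=0$, $d\varphi=d^*\varphi=0$) by Andreotti–Vesentini, i.e. Theorem in \cite{AV} already quoted. So $\varphi\in L^2\tilde{\mathcal H}^{p,q}_{\delbar}$ with all four first-order operators annihilating it.

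The plan is to test the Weitzenb\"ock formula \eqref{weitz}, $\Delta_d\varphi=\nabla^*\nabla\varphi+R(\varphi)$, against $\varphi$ but only after multiplying by a cutoff. First I would fix the standard Gaffney-type cutoff: since $g$ is complete, choose $\chi_k\in\cinf_c(M)$ with $0\le\chi_k\le1$, $\chi_k\equiv1$ on $B(x_0,k)$, $\supp\chi_k\subset B(x_0,2k)$, and $|d\chi_k|\le C/k$ uniformly. Then I would compute $\la\Delta_d\varphi,\chi_k^2\varphi\ra$ two ways. On one hand it equals $\la\nabla^*\nabla\varphi,\chi_k^2\varphi\ra+\la R(\varphi),\chi_k^2\varphi\ra$; integrating the first term by parts gives $\la\nabla\varphi,\nabla(\chi_k^2\varphi)\ra=\la\chi_k\nabla\varphi,\chi_k\nabla\varphi\ra+2\la\chi_k\nabla\varphi,(d\chi_k)\otimes\varphi\ra$ (schematically), so that $\lVert\chi_k\nabla\varphi\rVert^2 \le |\la\Delta_d\varphi,\chi_k^2\varphi\ra|+2\lVert\chi_k\nabla\varphi\rVert\,\lVert(d\chi_k)\otimes\varphi\rVert+|\la R(\varphi),\chi_k^2\varphi\ra|$. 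On the other hand $\la\Delta_d\varphi,\chi_k^2\varphi\ra=0$ is not quite immediate because $\chi_k^2\varphi$ is not harmonic; instead I would write $\la\Delta_d\varphi,\chi_k^2\varphi\ra=\la d\varphi,d(\chi_k^2\varphi)\ra+\la d^*\varphi,d^*(\chi_k^2\varphi)\ra$ and use $d\varphi=0$, $d^*\varphi=0$ to make this vanish outright. Hence $\lVert\chi_k\nabla\varphi\rVert^2\le 2\lVert\chi_k\nabla\varphi\rVert\,\lVert(d\chi_k)\otimes\varphi\rVert+\lVert R\rVert_{L^\infty}\lVert\varphi\rVert^2$.

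Now I would absorb: by Young's inequality $2\lVert\chi_k\nabla\varphi\rVert\,\lVert(d\chi_k)\otimes\varphi\rVert\le\tfrac12\lVert\chi_k\nabla\varphi\rVert^2+2\lVert(d\chi_k)\otimes\varphi\rVert^2$, and $\lVert(d\chi_k)\otimes\varphi\rVert^2\le (C/k)^2\lVert\varphi\rVert^2\to0$. This yields $\tfrac12\lVert\chi_k\nabla\varphi\rVert^2\le 2(C/k)^2\lVert\varphi\rVert^2+\lVert R\rVert_{L^\infty}\lVert\varphi\rVert^2$, a bound independent of $k$. Letting $k\to\infty$, monotone convergence gives $\lVert\nabla\varphi\rVert^2\le 2\lVert R\rVert_{L^\infty}\lVert\varphi\rVert^2<+\infty$, so $\varphi\in W^{1,2}_1A^{p,q}$, hence $\varphi\in W^{1,2}_1\mathcal H^{p,q}_{\delbar}$. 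The reverse inclusion $W^{1,2}_1\mathcal H^{p,q}_{\delbar}\subset L^2\mathcal H^{p,q}_{\delbar}$ is trivial since $\lVert\varphi\rVert\le\lVert\varphi\rVert_1$, which finishes the identification.

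The only genuinely delicate point is justifying the integrations by parts: $\varphi$ is smooth but only known a priori to lie in $L^2$ with $\Delta_d\varphi=0$, so the manipulations $\la\Delta_d\varphi,\chi_k^2\varphi\ra=\la d\varphi,d(\chi_k^2\varphi)\ra+\la d^*\varphi,d^*(\chi_k^2\varphi)\ra$ and $\la\nabla^*\nabla\varphi,\chi_k^2\varphi\ra=\la\nabla\varphi,\nabla(\chi_k^2\varphi)\ra$ must be legitimated; but $\chi_k^2\varphi$ has compact support, $\varphi$ is smooth, and each of $d\varphi$, $d^*\varphi$, $\nabla\varphi$ is smooth, so every integrand is a smooth compactly supported function and Stokes applies with no boundary term — exactly the Gaffney-cutoff argument used in \cite[Lemma 3.10]{HuTa}, which the statement already invokes. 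I expect this bookkeeping, together with keeping track that $R$ is bounded precisely because the sectional curvature is bounded (as recorded in the proof of Lemma \ref{equiv-kahler}), to be the whole content; no deeper obstacle should arise.
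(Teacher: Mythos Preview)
Your proof is correct and follows essentially the same Weitzenb\"ock-plus-cutoff argument as the paper. One small redundancy: since $\Delta_d\varphi=2\Delta_{\delbar}\varphi=0$ holds pointwise for the smooth form $\varphi$, the pairing $\la\Delta_d\varphi,\chi_k^2\varphi\ra$ vanishes trivially and you need not invoke $d\varphi=d^*\varphi=0$ via Andreotti--Vesentini; the paper simply starts from $0=\nabla^*\nabla\varphi+R(\varphi)$ and pairs with $f_\nu^2\varphi$.
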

\begin{proof}
The Weitzenb\"ock formula for $\phi\in A^{p,q}$ is
\begin{equation*}
\begin{split}
\Delta_{d}\phi=\nabla^*\nabla\phi+R(\phi),
\end{split}
\end{equation*}
where $R$ denotes an operator of order zero whose coefficients involve the curvature tensor. In particular, since the sectional curvature is bounded, then $R$ is a bounded operator.
By K\"ahler identities, we know $\Delta_{d}=2\Delta_{\delbar}$. Therefore, if $\phi\in L^2\mathcal{H}^{p,q}_{\delbar}$, then
\begin{equation*}
\begin{split}
0=\nabla^*\nabla\phi+R(\phi).
\end{split}
\end{equation*}
Since the metric is complete, there exists a sequence of compact subsets $\{K_\nu\}_{\nu\in\N}$, such that $\cup_\nu K_\nu=M$, $K_\nu\subset K_{\nu+1}$ and a sequence of smooth cut-off functions $\{f_\nu\}_{\nu\in\N}$ such that $0\le f_\nu\le1$, $f_\nu=1$ in a neighborhood of $K_\nu$, $\supp f_\nu\subset K_{\nu+1}$, and $|\nabla f_\nu|\le1$. For this last fact we refer, e.g., to \cite[Chap. VIII, Lemma 2.4]{De}.
For every $\nu\in\N$, we compute
\begin{equation*}
\begin{split}
0&=\la\nabla^*\nabla\phi+R(\phi),f_\nu^2\phi\ra\\
&=\la\nabla\phi,\nabla(f_\nu^2\phi)\ra+\la R(\phi),f_\nu^2\phi\ra\\
&=\la\nabla\phi,2f_\nu\nabla f_\nu\otimes\phi\ra+\la\nabla\phi,f_\nu^2\nabla\phi\ra+\la R(\phi),f_\nu^2\phi\ra\\
&=2\la f_\nu\nabla\phi,\nabla f_\nu\otimes\phi\ra+\lv f_\nu\nabla\phi\rv^2+\la R(\phi),f_\nu^2\phi\ra.
\end{split}
\end{equation*}
Moreover,
\begin{equation*}
\begin{split}
\lv \nabla(f_\nu \phi)\rv^2&=\la\nabla f_\nu\otimes\phi+f_\nu\nabla\phi,\nabla f_\nu\otimes\phi+f_\nu\nabla\phi\ra\\
&=\lv\nabla f_\nu\otimes\phi\rv^2+\lv f_\nu\nabla\phi\rv^2+2\la f_\nu\nabla\phi,\nabla f_\nu\otimes\phi\ra\\
&=\lv\nabla f_\nu\otimes\phi\rv^2-\la R(\phi),f_\nu^2\phi\ra\\
&\le C\lv\phi\rv^2,
\end{split}
\end{equation*}
for some $C>0$. Therefore, by the Fatou's lemma, it follows
\begin{equation*}
\lv\nabla\phi\rv^2\le\liminf_{\nu\to\infty}\lv\nabla(f_\nu \phi)\rv^2\le C\lv\phi\rv^2,
\end{equation*}
which implies $\lv\phi\rv_1<+\infty$.
\end{proof}
We immediately derive
\begin{corollary}\label{prop-kernel}
Let $(M,J,g,\omega)$ be a complete K\"ahler manifold of complex dimension $n$. Assume that the sectional curvature is bounded. Then $W^{1,2}_i\mathcal{H}^{p,q}_{BC}=L^2\mathcal{H}^{p,q}_{\delbar}$, for $i=1,2,3,4$.
\end{corollary}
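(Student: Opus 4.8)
The plan is to chain together the equalities and inclusions already established in the excerpt, for each fixed bidegree $(p,q)$. First I would recall that, since the spaces $W^{1,2}_i \mathcal{H}^{p,q}_{BC}$ for $i=1,2,3,4$ all consist of smooth forms $\phi$ with $\tilde\Delta_{BC}\phi=0$, they differ only by which finiteness condition $\lVert\phi\rVert_i<\infty$ is imposed; and by Lemma \ref{equiv-kahler} the norms $\lVert\cdot\rVert_1,\lVert\cdot\rVert_2,\lVert\cdot\rVert_3,\lVert\cdot\rVert_4$ are all equivalent on $A^{p,q}_c$, hence the finiteness conditions are interchangeable and $W^{1,2}_i\mathcal{H}^{p,q}_{BC}$ is independent of $i$. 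So it suffices to prove $W^{1,2}_1\mathcal{H}^{p,q}_{BC}=L^2\mathcal{H}^{p,q}_{\delbar}$.

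For the inclusion $\subseteq$: if $\phi\in W^{1,2}_1\mathcal{H}^{p,q}_{BC}$, then by Theorem \ref{cor-kahler} (applicable because $\lVert\phi\rVert<\infty$ and $\lVert\nabla\phi\rVert<\infty$) we get $\del\phi=\delbar\phi=\del^*\phi=\delbar^*\phi=0$, in particular $\Delta_{\delbar}\phi=0$; since $\phi$ is smooth and $\lVert\phi\rVert<\infty$, this gives $\phi\in L^2\mathcal{H}^{p,q}_{\delbar}$. For the reverse inclusion $\supseteq$: start with $\phi\in L^2\mathcal{H}^{p,q}_{\delbar}$. By Lemma \ref{lemma-weitz}, $\lVert\phi\rVert_1<\infty$, so $\phi\in W^{1,2}_1\mathcal{H}^{p,q}_{\delbar}$, and in particular Theorem \ref{cor-kahler} applies (now with $\lVert\nabla\phi\rVert<\infty$). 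Since $\Delta_{\delbar}\phi=0$ and the metric is complete and K\"ahler with bounded curvature, the Andreotti--Vesentini argument gives $\delbar\phi=0=\delbar^*\phi$; then by K\"ahler identities ($\del$ anticommutes with $\delbar^*$, $\del^*$ with $\delbar$, and $\Delta_d=2\Delta_{\del}=2\Delta_{\delbar}$) one also gets $\del\phi=0=\del^*\phi$, whence the concise form $\tilde\Delta_{BC}=\Delta_{\delbar}\Delta_{\delbar}+\del^*\del+\delbar^*\delbar$ annihilates $\phi$. Thus $\phi\in W^{1,2}_1\mathcal{H}^{p,q}_{BC}$.

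Combining the two inclusions yields $W^{1,2}_1\mathcal{H}^{p,q}_{BC}=L^2\mathcal{H}^{p,q}_{\delbar}$, and then the norm-equivalence remark at the top promotes this to all $i\in\{1,2,3,4\}$, giving the Corollary. I do not expect a serious obstacle here: the substantive analytic content has already been extracted into Lemma \ref{lemma-weitz} (the Weitzenb\"ock plus cutoff argument showing $L^2$ Dolbeault-harmonic forms have $L^2$ covariant derivative) and into Theorem \ref{cor-kahler} (the fourth-order analogue of Andreotti--Vesentini); the only thing to be mildly careful about is making sure the hypotheses of Theorem \ref{cor-kahler} — completeness, bounded sectional curvature, and crucially $\lVert\nabla\phi\rVert<\infty$ — are verified before invoking it, which is exactly why Lemma \ref{lemma-weitz} is needed as the preceding step rather than after.
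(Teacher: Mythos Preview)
Your proposal is correct and follows essentially the same route as the paper: the corollary is obtained by chaining together Theorem \ref{cor-kahler}, Lemma \ref{lemma-weitz}, and the norm comparisons of Lemmas \ref{ineq-kahler}--\ref{equiv-kahler}, exactly as in the discussion immediately preceding the statement. Your write-up is in fact a bit more explicit than the paper's ``we immediately derive'', spelling out both inclusions and invoking Andreotti--Vesentini plus the K\"ahler identity $\Delta_{\del}=\Delta_{\delbar}$ to get $\del\phi=\del^*\phi=0$ in the $\supseteq$ direction.

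One small point worth tightening: when you say that Lemma \ref{equiv-kahler} makes the finiteness conditions $\lVert\phi\rVert_i<\infty$ interchangeable, note that the equivalence is proved on $A^{p,q}_c$, while the forms in $W^{1,2}_i\mathcal H^{p,q}_{BC}$ need not have compact support. This is harmless here because the pointwise estimate \eqref{nabla-est} gives $\lVert\phi\rVert_i\le C\lVert\phi\rVert_1$ for \emph{all} smooth $\phi$ (so $i=1\Rightarrow i=2,3,4$), and conversely once you have shown via Theorem \ref{cor-kahler} and Lemma \ref{lemma-weitz} that every element of $L^2\mathcal H^{p,q}_{\delbar}$ satisfies $\del\phi=\delbar\phi=\del^*\phi=\delbar^*\phi=0$ and $\lVert\nabla\phi\rVert<\infty$, the equalities $\lVert\phi\rVert_2=\lVert\phi\rVert_3=\lVert\phi\rVert_4=\lVert\phi\rVert$ are immediate. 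So the independence of $i$ is best justified \emph{after} the two inclusions rather than before, but the logic is sound either way.
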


\begin{remark}\label{rem-del}
Note that Theorems \ref{teo-decomp-dol}, \ref{prop-reg-dol}, Lemma \ref{lemma-weitz} and Corollary \ref{prop-kernel} still hold if we substitute the operators $\delbar,\delbar^*,\Delta_{\delbar}$ respectivly with $\del,\del^*,\Delta_{\del}$.
\end{remark}

\begin{thebibliography}{20}

\bibitem{A} P. Albin, Analysis on non compact manifolds, Lecture notes, 18.158 - Topics in Differential Equations, Spring 2008.
\bibitem{AV} A. Andreotti, E. Vesentini, Carleman estimates for the Laplace-Beltrami equation on complex manifolds, {\em Inst. Hautes Études Sci. Publ. Math.} {\bf 25} (1965), 81–130.
\bibitem{B} F. Bei, Symplectic manifolds, Lp-cohomology and q-parabolicity, {\em Differential Geom. Appl.} 64 (2019), 136–157.
\bibitem{C} J. Cheeger, On the Hodge theory of Riemannian pseudomanifolds, {\em Geometry of the Laplace operator} (Proc. Sympos. Pure Math., Univ. Hawaii, Honolulu, Hawaii, 1979), pp. 91–146,
Proc. Sympos. Pure Math., XXXVI, Amer. Math. Soc., Providence, R.I., 1980.
\bibitem{CGM} J. Cheeger, M. Goresky, R. MacPherson, $L^2$-cohomology and intersection homology of singular algebraic varieties, Seminar on Differential Geometry, pp. 303–340, Ann. of Math. Stud., 102, Princeton Univ. Press, Princeton, N.J., 1982.
\bibitem{De} J.P. Demailly, {\em Complex Analytic and Differential Geometry}, Université de Grenoble, Saint-Martin d’Hères, 2012. 
\bibitem{Dr} G. De Rahm, {\em Differentiable Manifolds}, Springer, Berlin 1984.
\bibitem{D} J. Dodziuk, $L^2$ harmonic forms on complete manifolds, Seminar on Differential Geometry, pp. 291–302, Ann. of Math. Stud., 102, Princeton Univ. Press, Princeton, N.J., 1982.
\bibitem{G} M. Gromov, K\"ahler Hyperbolicity and $L_2$-Hodge Theory, {\em J. Differential Geom.} {\bf 33} (1991), 263--292.
\bibitem{HuTa}T. Huang, Q. Tan, $L^2$-hard Lefschetz complete symplectic manifolds, {\em Annali di Matematica Pura ed Applicata}, (4) 200 (2021), no. 2, 505–520.
\bibitem{Ko} K. Kodaira, Harmonic Fields in Riemannian Manifolds (Generalized Potential Theory), {\em Ann. of Math.} {\bf 50} (1949), 587--665.
\bibitem{KM} K. Kodaira, J. Morrow {\em Complex Manifolds}, AMS Chelsea Publishing, 2006. 
\bibitem{KS} K. Kodaira, D.C. Spencer, On deformations of complex analytic structures, III. Stability theorems for complex structures, {\em Ann. of Math.} {\bf 71} (1960), 43--76.

\bibitem{N} L. Nicolaescu, {\em Lectures on the Geometry of Manifolds}, World Scientific, third edition, 2021. 
\bibitem{PT1} R. Piovani, A. Tomassini, Bott-Chern harmonic forms on Stein manifolds, {\em Proc. Amer. Math. Soc.}, {\bf 147} (2019), 1551--1564.
\bibitem{PT2} R. Piovani, A. Tomassini, Bott-Chern harmonic forms on complete Hermitian manifolds, {\em Internat. J. Math.}, {\bf 30} (2019), no. 5.
\bibitem{RS} M. Reed, B. Simon, {\em Methods of Modern Mathematical Physics, I: Functional Analysis}, Revised and Enlarged Edition, Academic Press Inc., 1980.
\bibitem{S} M. Schweitzer, Autour de la cohomologie de Bott-Chern, {\tt preprint} arXiv:0709.3528v1[math. AG]





\end{thebibliography}
\end{document}